\newcommand*\owedge{\mathpalette\@owedge\relax}
\newcommand*\@owedge[1]{%
  \mathbin{%
    \ooalign{%
      $#1\m@th\bigcirc$\cr
      \hidewidth$#1\m@th\wedge$\hidewidth\cr
    }%
  }%
}
\DeclareMathOperator{\vol}{vol}
\newcommand{\Rm}{\mathrm{Rm}}
\newtheorem{Thm}{Theorem}[section]
\newtheorem{Cor}{Corollary}[section]
\newtheorem{Lem}{Lemma}[section]
\newtheorem{Def}{Definition}[section]
\numberwithin{equation}{section}
\newtheorem{theoL}{Theorem}
\newtheorem*{theoC'}{Theorem C'}
\newtheorem*{ThmG}{Gromov--Ruh Theorem}
\begin{document}

\title{Ricci Flow and Gromov Almost Flat Manifolds}
\author{Eric Chen, Guofang Wei, and Rugang Ye}
\address{Department of Mathematics, University of California, Santa Barbara CA 93106-3080, USA}
\curraddr{Department of Mathematics, University of California, Berkeley CA 94720-3840, USA}
\email{ecc@math.berkeley.edu}
\address{Department of Mathematics, University of California, Santa Barbara CA 93106-3080, USA}
\email{wei@ucsb.edu}
\address{Department of Mathematics, University of California, Santa Barbara CA 93106-3080, USA}
\email{yer@ucsb.edu}
\thanks{E. Chen is partially supported by NSF Grant DMS 3103392, and  partially supported by an AMS--Simons Travel Grant. }
\thanks{G. Wei is partially supported by NSF Grant DMS 2104704.}

\begin{abstract}
We employ the Ricci flow to derive a new theorem about Gromov almost flat manifolds, which generalizes and strengthens the celebrated Gromov--Ruh Theorem. In our theorem, the condition $diam^2 |K| \leq \epsilon_n$ in the Gromov--Ruh Theorem is replaced by the substantially weaker condition $\|Rm\|_{n/2}$ $ C_S^2 \leq \varepsilon_n$.
\end{abstract}

\date{\today}
\maketitle

\section{Introduction}

In this paper all manifolds, Riemannian manifolds and Riemannian metrics are assumed to be smooth. In the late 1970s M.~Gromov introduced the concept of almost flat manifolds.
\vspace{1ex}

\begin{Def}[\cite{Gromov1978a}]~
\begin{enumerate}[1)]
\item Let $\epsilon>0$. A Riemannian manifold $(M, g)$ (or the metric $g$) is called $\epsilon$-flat, provided that $diam^2 \sup_M |K| \leq \epsilon$, where $K$ denotes sectional curvature.
\item A compact manifold $M$ is called {\it almost flat}, provided that there is a sequence of Riemannian metrics 
$g_k$ on $M$ such that $diam_{g_k}^2 \sup_M  |K_{g_k}| \rightarrow 0$, i.~e.~
$g_k$ is $\epsilon_k$-flat, and $\epsilon_k \rightarrow 0$.
\end{enumerate}
\end{Def}

Note that $K$ can be replaced by $|Rm|$ in the above definition, where $Rm$ denotes the Riemann curvature tensor. Note also that 
$\sup_M |Rm| =\|Rm\|_{C^0(M)}=\|Rm\|_{L^{\infty}(M)}$. The crowning achievement of Gromov's theory of almost flat manifolds is the following celebrated theorem.
\vspace{1ex}

\begin{ThmG}[\cite{Gromov1978a,Ruh1982}]~
\begin{enumerate}[1)]
\item For each $n \geq 3$ there exists a positive constant $\epsilon_n$ with the  following property.
A compact manifold $M$ of dimension $n$ is diffeomorphic to an infranil manifold if and only it admits a Riemannian metric 
satisfying  $diam^2 \sup_M |K| \leq \epsilon_n$.
\item A compact manifold is almost flat if and only if it is diffeomorphic to an infranil manifold.
\end{enumerate}
\end{ThmG}

Here one can assume $\epsilon_n \leq 1$. We recall the definition of infranil manifolds.

\begin{Def} Let $N$ be a simply connected nilpotent Lie group $N$ acting on itself by left multiplication, and  $F$ a finite group of automorphisms  of $N$.  A smooth action of the semi-direct product $ N\rtimes F$ is then defined on $N$. An orbit space of $N$ by a discrete subgroup of $ N\rtimes F$ which acts freely on $N$ is called an infranil manifold. An infranil manifold is finitely covered by a nilmanifold.
\end{Def}

Note that flat manifolds, i. e. space forms of zero sectional curvature, are infranil.  On the other hand, there are many infranil manifolds which are not diffeomorphic to flat manifolds. 

In this paper we generalize and strengthen the Gromov--Ruh Theorem by replacing the $C^0$ bound of $Rm$ with the $L^{n/2}$ bound $\|Rm\|_{n/2}=(\int_M |Rm|^{n/2} dvol)^{2/n}$,  which is weighted by the squared Sobolev constant.

\begin{Def} Let $\varepsilon>0$.  A compact Riemannian manifold $(M, g)$ (or the metric $g$) is called {\it $L^{n/2}$-$\varepsilon$-flat}, provided that $\|Rm\|_{n/2} C_S^2  \leq \varepsilon$, where $C_S$ denotes the Sobolev constant of $(M, g)$ defined in (\ref{2.7}).

A compact manifold $M$ is called {\it $L^{n/2}$-almost flat}, provided that there is a sequence of Riemannian metrics 
$g_k$ on $M$ such that $\|Rm\|_{n/2}(g_k) C_S(g_k)^2  \rightarrow 0$, i.~e.~
$g_k$ is $L^{n/2}$-$\varepsilon_k$-flat, and $\varepsilon_k \rightarrow 0$.
\end{Def}

Our main results are the following three theorems.
\vspace{1ex}

\begin{theoL}\label{theo_A}~
\begin{enumerate}[1)]
\item For each $n \geq 3$ there exists a positive constant $\varepsilon_n$ with the  following property. A compact manifold $M$ of dimension $n$ is diffeomorphic to an infranil manifold if and only it admits a Riemannian metric satisfying 
\begin{equation}  \label{1.1}
\|Rm\|_{n/2} C_S^2 \leq  \varepsilon_n.  
\end{equation}
\item A compact manifold is {\it $L^{n/2}$-almost flat} if and only if it is diffeomorphic to an infranil manifold.
\end{enumerate}
\end{theoL}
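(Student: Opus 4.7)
The plan is to use the Ricci flow to upgrade the integral hypothesis \eqref{1.1} into the pointwise hypothesis of the Gromov--Ruh Theorem, and then invoke Gromov--Ruh. The ``only if'' direction is easier: given an infranil manifold, Gromov--Ruh furnishes metrics $g_k$ with $\mathrm{diam}_{g_k}^2\sup|K|\to 0$, and a short Ricci-flow regularization (or direct inspection of Gromov's left-invariant construction on nilmanifolds) ensures that these metrics can be arranged so that the Sobolev constant is bounded, whence $\|Rm\|_{n/2}(g_k)\,C_S(g_k)^2\to 0$. The main content is the ``if'' direction.

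So suppose $(M,g_0)$ satisfies $\|Rm\|_{n/2}(g_0)\,C_S(g_0)^2\le\varepsilon_n$, and start the Ricci flow $\partial_tg=-2\mathrm{Ric}$ from $g_0$. The first step is a parabolic Moser iteration for the Bochner-type inequality
\[
(\partial_t-\Delta)|Rm|^2\le -2|\nabla Rm|^2 + C(n)|Rm|^3,
\]
fed by the initial $L^{n/2}$-smallness and the Sobolev inequality encoded by $C_S(g_0)$. This yields existence on $[0,T]$ with $T\sim C_S(g_0)^2$ together with the pointwise decay
\[
\sup_M|Rm(\cdot,t)|\le\frac{C(n)\,\varepsilon_n}{t},\qquad 0<t\le T,
\]
so that in particular $\sup|Rm(T)|\le C(n)\,\varepsilon_n/C_S(g_0)^2$. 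The second step is to show that at time $T$ the manifold has become almost flat in the sense of Gromov, i.e.\ $\mathrm{diam}(g(T))^2\,\sup|Rm(g(T))|\le C'(n)\,\varepsilon_n$. Choosing $\varepsilon_n\le \epsilon_n/C'(n)$ and applying the Gromov--Ruh Theorem to $(M,g(T))$ then identifies $M$ as an infranil manifold.

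The hard part is the diameter estimate at time $T$. The hypothesis \eqref{1.1} does not constrain $\mathrm{diam}(g_0)$ a priori, and Hamilton's distance-distortion estimate $\tfrac{d}{dt}d_{g(t)}(x,y)\ge -C(n)\sqrt{\sup|Rm|}$ combined with the curvature decay only gives $|\mathrm{diam}(g(T))-\mathrm{diam}(g_0)|\le C(n)\,C_S(g_0)\sqrt{\varepsilon_n}$, which by itself does not control $\mathrm{diam}(g(T))$. The key will be to extract a length scale from $C_S(g_0)$ directly: the Sobolev inequality provides a uniform lower bound on the volumes of small balls, which combined with the pointwise curvature bound at time $T$ and Cheeger's lemma yields a lower bound on the injectivity radius of $g(T)$; together with a bound on the total volume propagated along the flow, a covering/volume-ratio argument then caps the number of balls needed to cover $M$, and hence $\mathrm{diam}(g(T))$. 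Carrying this out while simultaneously tracking the evolution of $C_S(g(t))$ along the Ricci flow is the technical heart of the proof.
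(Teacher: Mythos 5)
Your overall strategy for the ``if'' direction coincides with the paper's: run the Ricci flow, upgrade the $L^{n/2}$ smallness to the pointwise decay $\sup_M|Rm|\le c\,\varepsilon_n/t$ on a time interval of length comparable to $C_S(g_0)^2$, bound $diam(g(T))$ by $c(n)C_S(g_0)$, and invoke Gromov--Ruh. But the step you defer as ``the technical heart'' is precisely the missing idea, and without it both of your central estimates are unsubstantiated. The parabolic Moser iteration at times $t>0$ and your covering-argument diameter bound at time $T$ each require a Sobolev inequality for the \emph{evolving} metrics $g(t)$, not for $g_0$, and there is no evolution equation or general estimate that propagates $C_S(g(t))$ along the flow from a bound on $C_S(g_0)$ alone. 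The paper supplies this input from \cite{Ye21}: the Ricci-flow Sobolev inequality (Theorem \ref{theo_2.2}, via monotonicity of Perelman's entropy) gives \eqref{2.11} with the fixed constant $C_S(0)^2$ at all later times, \emph{provided} $\|Rm\|_{n/2}(g(t))$ stays small in the sense of \eqref{2.13}; keeping that hypothesis alive is itself a bootstrap (the continuity argument of Lemma \ref{lem_3.2}, giving \eqref{1.3}), and only then do the Moser iteration (Lemma \ref{lem_4.1}) and the diameter estimate go through. For the diameter bound, note also that your Cheeger's-lemma/injectivity-radius detour is unnecessary: the Sobolev inequality alone gives the ball-volume lower bound, which together with the volume control coming from \eqref{4.6} and \eqref{1.3} yields $diam(g(T_0))\le c(n)C_S(g_0)$ directly (this is Theorem \ref{thm_4.1} of \cite{Ye21}, which the paper applies). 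So the proposal is the right skeleton, but the load-bearing analytic tool --- a time-uniform Sobolev inequality along the flow coupled with the preservation of $L^{n/2}$ smallness --- is not provided, and no elementary substitute is indicated.

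There is also a genuine error in your ``only if'' direction. For an infranil manifold that is not diffeomorphic to a flat space form (e.g.\ a Heisenberg nilmanifold), Gromov's almost flat metrics $g_k$ \emph{cannot} be arranged to have bounded Sobolev constant: if $C_S(g_k)\le C$ while $\|Rm\|_{n/2}(g_k)\to 0$ (which holds after normalizing $diam=1$, since then $\|Rm\|_{n/2}\le \sup|Rm|\,vol^{2/n}\to 0$), then Theorem \ref{thm_1.1} would force $M$ to be a flat space form, a contradiction; collapse forces $C_S(g_k)\to\infty$. The correct argument, used in the paper around \eqref{1.6}, is that the volume factors cancel in the product: with $diam=1$ and $|K|\le\epsilon\le 1$, Gallot's estimate (Theorem \ref{gallot}) gives $C_S\le c(n,n-1)\,vol^{-1/n}$, so $\|Rm\|_{n/2}C_S^2\le c_n\,\epsilon$ even though each factor separately is uncontrolled; no Ricci-flow regularization is needed for this direction.
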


Our main tool for proving this theorem is the Ricci flow \cite{Hamilton82}
\begin{equation}  \label{1.2} 
\frac{\partial g}{\partial t}=-2 Ric,
\end{equation}
 which we employ to deform a given metric satisfying \eqref{1.1} into a metric with $diam^2 |K|$ small. The difficulty lies in obtaining this smallness. A Sobolev inequality along the Ricci flow and a diameter estimate for the Ricci flow in \cite{Ye21} play important roles in deriving the needed estimates.

\begin{theoL}\label{theo_B}
  There are positive constants $\varepsilon(n, \gamma)$ and $ c(n, \gamma)$ depending only on $n \geq 3$ and $\gamma>0$ with the following property.  Let $(M, g_0)$ be a compact Riemannian manifold of dimension $n \geq 3$. Assume that $g_0$ satisfies
 $\|Rm\|_{n/2} C_S^2 \leq \varepsilon(n, \gamma)$.  Then the Ricci flow starting at $g_0$ exists on the time interval $[0, T_0]$, where 
\[
T_0 =\gamma\, vol(g(0))^{2/n} C_S(g(0))^2.
\] 
Moreover, the following estimates hold true:
\begin{equation} \label{1.3}
\|Rm\|_{n/2}(g(t)) \leq 2 \|Rm\|_{n/2}(g(0))
\end{equation}
and 
\begin{equation} \label{1.4}
\|Rm\|_{C^0}(g(t))  \leq c(n, \gamma) \frac{C_S(g(0))^2}{t}
  \| Rm \|_{n/2}(g_0)
\end{equation}
for $t \in (0, T_0]$. 
In particular, there holds
\begin{equation} \label{1.5}
\|Rm\|_{C^0}(g(T_0))  \leq \frac{c(n, \gamma)}{\gamma\, vol(g(0))^{2/n}}
  \| Rm \|_{n/2}( g_0).
\end{equation}
Finally, the $vol(g(0))$-rescaled version of the Sobolev inequality \eqref{2.11} with $\alpha=0$ in Theorem \ref{theo_2.2} below holds true for all 
$t \in (0, T_0]$.
\end{theoL}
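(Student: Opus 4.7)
The plan is a bootstrap argument on the maximal time interval on which the flow exists and three weakened estimates hold simultaneously: $\|Rm\|_{n/2}(g(t)) \leq 3\|Rm\|_{n/2}(g(0))$, the pointwise bound (1.4) with $2c(n,\gamma)$ in place of $c(n,\gamma)$, and a Sobolev inequality at time $t$ with constant comparable to $C_S(g(0))$. Short-time existence of the Ricci flow on $(M,g_0)$ gives a nonempty initial interval. The persistence of the Sobolev inequality is used as a black box via the result of Ye cited as \cite{Ye21} in its $\vol(g(0))$-rescaled form, which propagates the initial Sobolev inequality to all of $[0,T_0]$ without any smallness of curvature. The aim is to choose $\varepsilon(n,\gamma)$ small and $c(n,\gamma)$ appropriately so that the first two bootstrap estimates improve strictly up to the supremal time, forcing it to equal $T_0$ and producing the stated constants $2$ and $c(n,\gamma)$.

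For the $L^{n/2}$ estimate (1.3), I start from the standard Ricci flow evolution inequality $\partial_t |Rm| \leq \Delta |Rm| + c(n)|Rm|^2$ and use Kato's inequality to derive
\[
\frac{\partial}{\partial t}|Rm|^{n/2} + c(n)\bigl|\nabla |Rm|^{n/4}\bigr|^2 \leq \Delta |Rm|^{n/2} + c(n)|Rm|^{n/2+1}.
\]
Integrating over $M$ and absorbing the scalar-curvature contribution from $\partial_t d\vol = -R\, d\vol$ into the cubic term, I then apply Hölder with conjugate exponents $n/(n-2)$ and $n/2$, followed by the Sobolev inequality at time $t$ applied to $w = |Rm|^{n/4}$:
\[
\int_M |Rm|^{n/2+1}\, d\vol \leq \|Rm\|_{n/2}(g(t))\, C_S(g(t))^2 \Bigl(\int_M |\nabla w|^2\, d\vol + \text{l.o.t.}\Bigr).
\]
Under the bootstrap hypothesis the prefactor $C_S(g(t))^2 \|Rm\|_{n/2}(g(t))$ is at most a fixed multiple of $\varepsilon(n,\gamma)$, so the gradient piece on the right is absorbed into the good gradient term on the left. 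Integrating in $t$ and using $T_0 = \gamma\, \vol(g(0))^{2/n} C_S(g(0))^2$, for $\varepsilon(n,\gamma)$ small enough the resulting increment is strictly less than $\|Rm\|_{n/2}(g(0))$, which yields (1.3) with factor $2$ rather than $3$.

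For the pointwise estimate (1.4) I run a parabolic Moser iteration: multiply the evolution of $|Rm|$ by $|Rm|^{p-1}$ with $p = p_k = (n/2)(n/(n-2))^k$, use Kato's inequality and the Sobolev inequality at time $t$ (whose constant is controlled via \cite{Ye21}), and derive a recursion on $L^{p_k}$ norms that closes precisely because $C_S(g(t))^2 \|Rm\|_{n/2}(g(t)) \lesssim \varepsilon(n,\gamma)$ is small. Iterating $p_k \to \infty$ and tracking the $t$-weights produces the $t^{-1}$ decay in (1.4) with a constant strictly smaller than the one used in the bootstrap. The improved forms of (1.3) and (1.4), combined with the persistent Sobolev inequality and the standard curvature-blowup continuation criterion for the Ricci flow, rule out any maximal time less than $T_0$ and give the specialization (1.5) at $t = T_0$. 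The main obstacle I expect is the threefold coupling of the Sobolev constant, the integral norm, and the pointwise norm of $Rm$: each estimate depends on the other two, and closing the bootstrap requires choosing $\varepsilon(n,\gamma)$ small enough that the Hölder-Sobolev absorptions leave strict room, which is only feasible because \cite{Ye21} controls the evolving Sobolev constant unconditionally and so decouples that variable from the curvature feedback loop.
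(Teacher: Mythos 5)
There is a genuine gap at the pointwise estimate \eqref{1.4}: your Moser iteration cannot close using only the smallness of the critical quantity $\|Rm\|_{n/2}(t)\,C_S^2(0)$. Starting from \eqref{2.3}, at level $p$ the nonlinearity enters as $c(n)\,p\int_M|Rm|^{p+1}$; if you control it solely through the critical norm via H\"older and the Sobolev inequality, the gradient term to be absorbed carries the coefficient $c(n)\,p\,\|Rm\|_{n/2}(t)\,C_S^2(0)$ (times an exponential factor), while the good term $-\frac{4(p-1)}{p}\int_M|\nabla|Rm|^{p/2}|^2$ has coefficient at most $4$. Since the iteration exponents $p_k\to\infty$, no fixed $\varepsilon(n,\gamma)$ can absorb the growing factor $p_k$ at every stage, so the recursion you describe does not close: smallness of the scale-invariant norm is exactly the borderline case (this criticality is essentially the issue the paper flags in \cite{DPW2000}). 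The paper circumvents it by first upgrading to the supercritical norm $\|Rm\|_{p_0/2}$ with $p_0=n^2/(n-2)$: the space--time gradient energy bound \eqref{3.9} plus a pigeonhole in time produces a good slice $t^*\in[t/3,t/2]$ (Lemma \ref{lem_n2_control}), which is then propagated forward on $[t/2,t]$ (Lemma \ref{lem_3.4}); only with this supercritical bound in hand is the iteration run in the appendix, via the $p$-dependent Young splitting \eqref{5.1}, which trades the factor $p$ in front of the gradient term for a $p^{n/2}$ factor on the zeroth-order term that is summable along the iteration and yields the $t^{-1}$ rate. Your sketch omits this intermediate step, and without it \eqref{1.4} with a constant depending only on $(n,\gamma)$ is not obtained.

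A secondary problem is your description of the input from \cite{Ye21}: Theorem \ref{theo_2.2} is not an unconditional propagation of the Sobolev inequality. It requires \eqref{2.8} at $t=0$ and \eqref{2.10} (equivalently \eqref{2.13} when $\alpha=0$) at time $t$, i.e.\ a smallness condition on curvature at time $t$, weighted by $C_S^2(0)$ and by the factor $e^{\frac{2t}{n}(4\delta_0-\alpha)}$, which is controlled only for $t$ on the order of $C_S^2(0)$ --- this is precisely where the time scale $T_0=\gamma\, vol(g(0))^{2/n}C_S^2(0)$ enters. So the Sobolev inequality does not ``decouple from the curvature feedback loop''; it must be carried inside the continuity argument, as the paper does in Lemma \ref{lem_3.2} by including the condition \eqref{3.6} in the definition of the set $I$ alongside the $L^{n/2}$ bound. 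Since your bootstrap does list a Sobolev bound among its hypotheses, this part is repairable, but as written your justification leans on an unconditionality the cited theorem does not provide. Your treatment of \eqref{1.3} (evolution inequality, H\"older, Sobolev applied to $|Rm|^{n/4}$, absorption of the gradient term, Gronwall over $[0,T_0]$) does match the paper's Lemmas \ref{lem_3.1} and \ref{lem_3.2}, and the blow-up continuation argument at the end coincides with the paper's.
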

	
Besides being the basis for Theorem \ref{theo_A}, Theorem \ref{theo_B} also has independent significance from the point view of the theory of the Ricci flow. The next theorem is a corollary of Theorem A, which extends the Gromov--Ruh theorem to the $L^{n/2}$ setting and involves only rudimentary geometric quantities. The Sobolev constant does not appear in the statement of this theorem.

\begin{theoL}\label{theo_C}
There exists a constant $\varepsilon(n, \kappa)>0$ depending only on $n \geq 3$ and $\kappa\geq 0$ such that if $(M, g)$ is a compact Riemannian manifold of dimension $n \geq 3$ with 
$diam^2 \, Ric \geq -\kappa$ and $\|Rm\|_{n/2} (\frac{diam}{vol^{1/n}})^2 \leq \varepsilon(n, \kappa)$, then $M$ is diffeomorphic to an infranil manifold.
\end{theoL}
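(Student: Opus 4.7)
The plan is to reduce Theorem C to Theorem A by producing, under the rudimentary hypotheses stated, an upper bound on the Sobolev constant of the form
\[
C_S^2 \;\leq\; C(n,\kappa)\,\frac{\mathrm{diam}^2}{\mathrm{vol}^{2/n}}.
\]
Once such a bound is in hand, multiplying by $\|Rm\|_{n/2}$ gives
\[
\|Rm\|_{n/2}\, C_S^2 \;\leq\; C(n,\kappa)\, \|Rm\|_{n/2}\Bigl(\tfrac{\mathrm{diam}}{\mathrm{vol}^{1/n}}\Bigr)^{\!2}\;\leq\; C(n,\kappa)\,\varepsilon(n,\kappa),
\]
and choosing $\varepsilon(n,\kappa)\leq \varepsilon_n/C(n,\kappa)$, with $\varepsilon_n$ the constant from Theorem A, lets us invoke Theorem A to conclude that $M$ is diffeomorphic to an infranil manifold.

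So the content of the proof lies in justifying the Sobolev constant bound. The hypothesis is $\mathrm{diam}^2\,\mathrm{Ric}\geq -\kappa$, i.e.\ $\mathrm{Ric}\geq -(n-1)H$ with $(n-1)H\,\mathrm{diam}^2\leq \kappa$. Under such a Ricci lower bound on a compact manifold, the classical Sobolev inequality of Gallot (or equivalently the Croke/Saloff-Coste route via the relative isoperimetric inequality from Bishop--Gromov) gives a Sobolev inequality of the form \eqref{2.11} with constant controlled by the scale-invariant quantity $\mathrm{diam}^2\,\mathrm{Ric}$ together with $\mathrm{diam}/\mathrm{vol}^{1/n}$; specifically, after tracking the scaling in the definition (\ref{2.7}) used in this paper, one obtains the displayed bound on $C_S^2$ with a constant $C(n,\kappa)$ depending only on $n$ and on the lower bound $\kappa$ appearing in $\mathrm{diam}^2\,\mathrm{Ric}\geq -\kappa$.

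I would carry this out by first rescaling so that $\mathrm{diam}=1$; then the Ricci lower bound reads $\mathrm{Ric}\geq -\kappa$ and one needs $C_S^2\leq C(n,\kappa)/\mathrm{vol}^{2/n}$. In the rescaled metric this is exactly the content of Gallot's Sobolev estimate under a lower Ricci bound together with the Bishop--Gromov volume comparison, since those together yield a relative isoperimetric inequality
\[
\mathrm{vol}(\partial\Omega)^{n/(n-1)} \;\geq\; c(n,\kappa)\,\mathrm{vol}(M)^{1/(n-1)}\,\mathrm{vol}(\Omega)
\]
on any $\Omega$ with $\mathrm{vol}(\Omega)\leq \tfrac12\mathrm{vol}(M)$, which in turn is equivalent by the standard Federer--Fleming argument to a Sobolev inequality with constant $C(n,\kappa)\,\mathrm{vol}(M)^{-2/n}$. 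Undoing the rescaling reinstates the diameter factor in the form displayed above.

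The main obstacle is purely bookkeeping: matching the normalization of the Sobolev constant $C_S$ defined in (\ref{2.7}) of this paper (which likely involves both the gradient and an $L^2$ term with a volume-dependent weight) with the form produced by Gallot's inequality, and verifying that the implicit constant really depends only on $n$ and $\kappa$. Once this identification is made, the reduction to Theorem A is immediate and requires no further geometric analysis.
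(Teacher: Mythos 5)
Your proposal is correct and follows essentially the same route as the paper: the paper's proof simply sets $\varepsilon(n,\kappa)=c(n,\kappa)^{-2}\varepsilon_n$, invokes Gallot's Sobolev constant estimate (Theorem \ref{gallot}) to get $C_S \leq c(n,\kappa)\,diam/vol^{1/n}$, and then applies Theorem \ref{theo_A}, exactly as you do. The only difference is that the paper cites Gallot's bound as a black box rather than sketching its isoperimetric proof, but this does not change the argument.
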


The $L^p$  version of this theorem with $p>n/2$ is proved in \cite{DPW2000}. The $L^{n/2}$ version is also claimed in \cite{DPW2000}, but the argument in \cite{DPW2000} has a serious gap as pointed out in \cite[Page 6]{Streets16},  namely the time integral of a pointwise curvature estimate there diverges in the $p=n/2$ case because of its $t^{-1}$ order as $t \rightarrow 0$, and hence does not yield the needed bounds in \cite{DPW2000}.  (Though our pointwise curvature estimate \eqref{4.1} (or \eqref{1.4}) is also of $t^{-1}$ order, we only need the time integrals  of $\int_M |\nabla |Rm|^{n/4}|^2$, $\|Rm\|_{n/2}$ and the total scalar curvature to converge, see \eqref{3.9}, \eqref{3.11}, \eqref{4.6} and \eqref{4.7}. )

Gallot's Sobolev constant estimate (Theorem \ref{gallot}) is needed for deriving 
Theorem \ref{theo_C} from Theorem \ref{theo_A}. This estimate has been extended to allow for integral Ricci curvature lower bounds \cite{Gallot1988b, Petersen-Sprouse1998}. Therefore the pointwise condition on the Ricci curvature in Theorem \ref{theo_C} can be replaced by an integral condition as below.

\begin{theoC'}
  There exists a constant $\epsilon (n, p, \kappa, D)>0$ depending only on $n\geq 3$, $p>n/2$, $\kappa\geq 0$, and $D>0$ such that if $(M,g)$ is a compact Riemannian manifold of dimension $n\geq 3$ with $diam_M \le D$, $\left( \frac{1}{\vol M} \int_M (Ric  -\kappa)_-^p\right)^{\frac 1p} \le \epsilon (n, p, \kappa, D)$, and $\left( \frac{1}{\vol M} \int_M |\Rm|^{n/2}\right)^{2/n}   \le \epsilon (n, p, \kappa, D)$, then $M$ is diffeomorphic to an infranil manifold. 
\end{theoC'}

We emphasize that it is an $L^p$ {\it lower} bound for Ricci curvature which is assumed in this theorem.  

Now we discuss the condition $\|Rm\|_{n/2} C_S^2 \leq \varepsilon_n$ in Theorem \ref{theo_A} and the concept of $L^{n/2}$-almost flatness.  Assume the condition $diam^2 |K| $ $\leq \epsilon$ for some $\epsilon \in (0, 1]$. By a rescaling we can assume that $diam=1$ and $|K| \leq \epsilon$. 
Then we have by Gallot's estimate of the Sobolev constant \cite{Gallot1988} (see Theorem~\ref{gallot})
\[
C_S \leq c(n, n-1) vol^{-1/n}.
\]
It follows that
\begin{equation} \label{1.6}
\|Rm\|_{n/2} C_S^2 \leq  c_n \epsilon
\end{equation}
for a positive constant $c_n$ depending only on $n$. Hence 
almost flat manifolds are $L^{n/2}$-almost flat. Moreover, the condition $diam^2 |K| \leq \epsilon_n$ in the Gromov--Ruh Theorem  implies the condition $\|Rm\|_{n/2} C_S^2 \leq \varepsilon_n$ in Theorem A, if we replace $\epsilon_n$ in the Gromov--Ruh Theorem by $c_n^{-1} \varepsilon_n$ if necessary. The above reasoning also shows that the square power of 
$C_S$ in the quantity $\|Rm\|_{n/2} C_S^2$ is natural. Of course, the integral quantity 
$\|Rm \|_{n/2}$ is much weaker than the pointwise quantity $\|Rm\|_{C^0(M)}$ in nature.  The choice of the exponent $n/2$ is also most 
natural because $\|Rm\|_{n/2}$ is scaling invariant. Moreover, $n/2$ is a critical exponent from the point of view of analysis. 

The fact that the Sobolev constant $C_S$  does not stand alone in the conditions in Theorem A and Theorem B is important.  Its square is multiplied to $\|Rm\|_{n/2}$ instead, and the smallness of the product allows $C_S$ to be large.   This is the reason why infranil manifolds which are not diffeomorphic to flat manifolds can occur, and why collapsing can occur under the conditions of Theorem A or Theorem B.  To better understand this, it helps to look at the following theorem, which follows from the estimates in Theorem B.  Its proof and further discussions can be found in \cite{CWY21c}.
\begin{Thm}[\cite{CWY21c}]\label{thm_1.1}
    For each $n \geq 3$ and each $C>0$ there is a positive constant $\varepsilon(n, C)$ depending only on 
$n$ and $C$ with the following property.  An arbitrary compact Riemannian manifold $(M, g)$ of dimension $n \geq 3$ with $\|Rm\|_{n/2} \leq \varepsilon(n, C)$ 
and $C_S \leq C$ is diffeomorphic to a flat space form.
\end{Thm}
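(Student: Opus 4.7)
The plan is to apply Theorem \ref{theo_B} to evolve $g$ via the Ricci flow into a metric with small $C^{0}$ curvature, use the preserved Sobolev inequality to verify the Gromov--Ruh hypothesis $diam^{2}|K| \leq \epsilon_{n}$, and then exploit the fact that $C_{S}\leq C$ rules out collapse of the infranil fiber, forcing the underlying infranil manifold to be diffeomorphic to a flat space form.

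First fix $\gamma = 1$ and choose $\varepsilon(n, C)$ so small that $\varepsilon(n, C) C^{2} \leq \varepsilon(n, 1)$; then $\|Rm\|_{n/2}(g)\, C_{S}(g)^{2} \leq \varepsilon(n,1)$, so Theorem \ref{theo_B} applies with $\gamma = 1$. Since the hypotheses are scale invariant, I may rescale to $vol(g) = 1$, whence $T_{0} = C_{S}(g)^{2} \leq C^{2}$ and \eqref{1.5} gives
\[
\|Rm\|_{C^{0}}(g(T_{0})) \leq c(n,1)\, \varepsilon(n,C).
\]
The Sobolev inequality at time $T_{0}$ asserted by the last sentence of Theorem \ref{theo_B}, together with the Ricci flow diameter estimate of \cite{Ye21} and $vol(g) = 1$, yields a uniform diameter bound $diam(g(T_{0})) \leq D(n, C)$. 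Consequently
\[
diam(g(T_{0}))^{2}\, \|Rm\|_{C^{0}}(g(T_{0})) \leq D(n,C)^{2}\, c(n,1)\, \varepsilon(n,C),
\]
which is $\leq \epsilon_{n}$ for $\varepsilon(n,C)$ small enough, so by the Gromov--Ruh Theorem $M$ is diffeomorphic to an infranil manifold.

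To upgrade ``infranil'' to ``flat space form'', I argue by contradiction. If no $\varepsilon(n,C)$ works, there exists a sequence $(M_{k}, g_{k})$ with $\|Rm\|_{n/2}(g_{k}) \to 0$ and $C_{S}(g_{k}) \leq C$, yet no $M_{k}$ is diffeomorphic to a flat space form. The construction above produces metrics $h_{k} = g_{k}(T_{0}^{k})$ on $M_{k}$ with $\|Rm\|_{C^{0}}(h_{k}) \to 0$, $vol(h_{k}) = 1$, and $diam(h_{k}) \leq D$. Testing the preserved Sobolev inequality on cutoffs supported in small balls yields a uniform local volume lower bound $vol(B_{h_{k}}(x, r)) \geq c\, r^{n}$ for small $r$ with $c = c(n, C)$; combined with vanishing curvature this gives a uniform positive lower bound on the injectivity radius via Cheeger's lemma. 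Cheeger--Gromov compactness then extracts a smooth subsequential limit $(M_{\infty}, h_{\infty})$ that is a compact flat $n$-manifold, hence by Bieberbach a flat space form, and smooth convergence forces $M_{k} \cong M_{\infty}$ diffeomorphically for large $k$, a contradiction. The main obstacle is executing this last non-collapsing step cleanly: one must show that the preserved Sobolev inequality of Theorem \ref{theo_B} really does upgrade to a uniform injectivity radius lower bound once pointwise curvature is small, rather than merely a global volume lower bound, so that Cheeger--Gromov compactness produces a genuine $n$-dimensional limit rather than a collapsed one.
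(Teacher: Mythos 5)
The paper does not actually prove Theorem \ref{thm_1.1} here: it only remarks that the result ``follows from the estimates in Theorem \ref{theo_B}'' and refers to \cite{CWY21c} for the proof, so there is no internal argument to compare against line by line. Your proposal is a reasonable realization of exactly that remark. Its first half essentially reproduces the paper's proof of Theorem \ref{theo_A}: normalize $\vol(g)=1$ (legitimate, since $\|Rm\|_{n/2}$ and $C_S$ are scale invariant), run Theorem \ref{theo_B} with $\gamma=1$ (your choice $\varepsilon(n,C)\,C^2\le\varepsilon(n,1)$ does give $\|Rm\|_{n/2}C_S^2\le\varepsilon(n,1)$), and then obtain $diam(g(T_0))\le c(n)C_S(g_0)\le c(n)C$ from the preserved Sobolev inequality, the volume bound \eqref{4.8} and Theorem \ref{thm_4.1}, exactly as in \eqref{4.9}; together with \eqref{1.5} this makes $diam^2\|Rm\|_{C^0}(g(T_0))$ small. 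The new content is the upgrade from ``infranil'' to ``flat space form'', and your mechanism is sound and standard: the preserved Sobolev inequality has the form $\|u\|_{2n/(n-2)}^2\le A\|\nabla u\|_2^2+B\|u\|_2^2$ with $A\le c(n)C^2$, $B\le c(n)$, and applying it to cutoff functions yields $\vol(B(x,r))\ge c(n,C)r^n$ for $r\lesssim\sqrt{A/B}$; since at time $T_0$ you also have the two-sided pointwise bound $\|Rm\|_{C^0}\to0$, Cheeger's lemma then gives a uniform injectivity radius lower bound, so the obstacle you flag at the end is not a genuine one. Two smaller points: (i) the Gromov--Ruh/infranil step is logically redundant in your argument, since the compactness argument alone shows $M_k$ is diffeomorphic to a compact flat manifold, which is by definition (Bieberbach) a flat space form; (ii) with only $C^0$ curvature bounds, Cheeger--Gromov convergence is a priori $C^{1,\alpha}$ (weak $W^{2,p}$), so to conclude the limit is smooth and flat you should either use harmonic-coordinate elliptic regularity plus the fact that $Rm_k\to0$ in $L^\infty$ (hence weakly in $L^p$), or invoke Shi's derivative estimates on $[T_0/2,T_0]$ to get smooth convergence. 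Modulo these routine details your outline is correct; whether it coincides with the argument of \cite{CWY21c} cannot be checked from this paper alone.
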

 
 When $n=4$ this has previously been established by Streets in his study of the gradient flow of $\int_M |\Rm|^2\ dV$ \cite[Corollary 1.17]{Streets16}.   
  
One can also ask whether in Theorem \ref{theo_A} the quantity $\|Rm\|_{n/2}$ alone, without the weight $C_S^2$, would be sufficient for deriving the infranil conclusion.  The answer is no. To see this, consider the manifold $S^{n-1}\times S^1$ for $n \geq 3$ equipped with the metric $g_{\epsilon}=g_{S^{n-1}}+\epsilon^2 g_{S^1}$, where $g_{S^{n-1}}$ and $g_{S^1}$ are the standard metrics of $S^{n-1}\subset\mathbb{R}^n$ and $S^1\subset\mathbb{R}^2$ respectively. We have $\|Rm_{g_\epsilon}\|_{n/2}=c(n)\epsilon^{2/n}$ for some positive constant $c(n)$ depending only on $n$, and this tends to zero as $\epsilon$ goes to zero. However
the universal cover of $S^{n-1}\times S^1$ is not diffeomorphic to $\mathbb{R}^n$, and therefore it is not diffeomorphic to an infranil manifold. 

We can also apply Gallot's Sobolev constant estimate in this example to deduce that $C_S(g_{\epsilon}) \leq \tilde c(n) \epsilon^{-1/n}$
for a positive constant $\tilde c(n)$. 
As a result we conclude further that in Theorem \ref{theo_A} (and in Theorem \ref{theo_B}) the squared power of $C_S$ in $\|Rm\|_{n/2} C_S^2$ is sharp. Namely for any $\alpha<2$ and $\delta>0$ there exists a  compact manifold with $\|Rm\|_{n/2}C_S^{\alpha}<\delta$ which is not diffeomorphic to an infranil manifold. (Note that for $\alpha_1\geq\alpha_2$, $\|Rm\|_{n/2}C_S^{\alpha_1}$ small implies $\|Rm\|_{n/2}C_S^{\alpha_2}$ small.)

Finally we observe that the condition $diam^2 \sup_M  |Rm| \leq \varepsilon$ (which is equivalent to $diam^2 \sup_M |K| \leq c(n) \varepsilon$ for a positive constant $c(n)$) can be viewed as a pinching condition in the form
\begin{equation}
\sup_M |Rm-\alpha G| \leq \varepsilon diam^{-2}
\end{equation}
for the case $\alpha=0$, where $G(x, y, z, w)=g(x, z)g(y, w)-g(x, w)g(y, z)$.  Similarly, the condition $\|Rm\|_{n/2} C_S^2 \leq \varepsilon$ can be viewed as the $\alpha=0$ case of the 
pinching condition
\begin{equation} \label{1.7}
\|Rm-\alpha G\|_{n/2} \leq \varepsilon C_S^{-2}.
\end{equation}
We  have also obtained curvature estimates along the lines of Theorem B  for the cases of positive or negative $\alpha$; see \cite{CWY21c}, where besides Theorem \ref{thm_1.1}, pinching theorems for the $\alpha>0$ and $\alpha<0$ cases are also proved (with $\alpha$ being given by the average scalar curvature multiplied by a dimensional constant), in which the condition $\|Rm-\alpha G\|_{n/2} C_S^2 \leq \varepsilon_n$ and some additional conditions are assumed to lead to a space form conclusion.

The research in this paper was a project parallel to \cite{CWY2021}, which studied integral curvature pinching of positive Yamabe metrics, and, like 
that work, is related to \cite{Chen}, which studied integral curvature pinching on asymptotically flat manifolds.

\section{Preliminaries} 

The following evolution equation for Riemann curvature tensor $Rm$ holds along the Ricci flow \eqref{1.2} on a manifold $M$ \cite{Hamilton86}
\begin{equation} \label{2.1}
\frac{\partial Rm}{\partial t} = \Delta Rm + Rm^2+Rm^{\sharp}+ Ric* Rm,
\end{equation}
where $Ric*Rm$ is a quadratic expression involving $Ric$ and $Rm$.
It follows that 
\begin{equation} \label{2.2}
\frac{\partial |Rm|}{\partial t} \leq \Delta |Rm| +c(n) |Rm|^2
\end{equation}
in the sense of distributions, or in the weak sense (see for instance \cite[Equation (6.1)]{Chow}). Here and in the sequel, $c(n)$ denotes a positive constant depending only on $n$, whose value can be different in different places. (The $c(n)$s appearing in different lines of the same computation denote the same constant.)   Assume that $M$ is compact. Then it follows from \eqref{2.1} or \eqref{2.2} that 
\begin{equation} \label{2.3}
\frac{\partial }{\partial t} \int_M |Rm|^p \leq c(n)p \int_M |Rm|^{p+1} -\frac{4(p-1)}{p} \int_M |\nabla |Rm|^{p/2}|^2
\end{equation}
for $p\geq 1$ and a.e.~$t$. Here and in the sequel the notation of the volume form is omitted.  (To prove \eqref{2.3}, one first calculates in terms of $|Rm|_{\epsilon}=(|Rm|^2+\epsilon^2)^{1/2}$ and then lets $\epsilon \rightarrow 0$, applying Fatou's lemma.) 
In particular we have 
\begin{equation} \label{2.4}
\frac{\partial }{\partial t} \int_M |Rm|^{n/2} \leq c(n) \int_M |Rm|^{\frac{n}{2}+1} -\frac{4(n-2)}{n} \int_M |\nabla |Rm|_{}^{n/2}|^2.
\end{equation}
On the other hand, we have by the H\"{o}lder inequality
\begin{equation} \label{2.5}
\int_M |Rm|^{p+1} \leq (\int_M |Rm|^{n/2})^{2/n} (\int_M |Rm|^{p \cdot
 \frac{n}{n-2}})^{\frac{n-2}{n}}.
\end{equation} 
In particular
\begin{equation} \label{2.6}
\int_M |Rm|_{}^{\frac{n}{2}+1} \leq (\int_M |Rm|_{}^{n/2})^{2/n} (\int_M |Rm|_{}^{\frac{n}{2} \cdot
 \frac{n}{n-2}})^{\frac{n-2}{n}}.
\end{equation}

Next we recall the definition of the Sobolev constant of a compact Riemannian manifold.

\begin{Def}[Sobolev constant]
Let $(M,g)$ be a compact Riemannian manifold of dimension $n\geq 3$. Its {\it ($L^2$) Sobolev constant} $C_S(M,g)$ is defined to be
\begin{equation} C_S(M,g)=\sup\left\{\|u\|_{\frac{2n}{n-2}}-\frac{1}{\vol(M,g)^{1/n}}\|u\|_2:~u\in C^1(M),\|\nabla u\|_2\leq 1\right\}.\label{2.7} 
\end{equation}
Equivalently, $C_S(M,g)$ is the smallest number $C$ such that the Sobolev inequality
\[\|u\|_{\frac{2n}{n-2}}\leq C\|\nabla u\|_{2}+\frac{1}{\vol(M,g)^{1/n}}\|u\|_2\]
holds true for all $u\in C^1(M)$.  (Note that we can replace $C^1(M)$ by the Sobolev space 
$W^{1,2}(M)$.)
\end{Def}

Gallot's following estimate for the Sobolev constant is well-known.

\begin{Thm}[\cite{Gallot1988}] \label{gallot}
For each $n \geq 3$ and each $\kappa \geq 0$ there is a constant  $c(n, \kappa)>0$ with the following property. Let $(M^n,g)$ be a compact Riemannian manifold of dimension $n \geq 3 $  satisfying $diam^2 \, Ric \geq -\kappa$ for some constant $\kappa \geq 0$. Then its Sobolev constant satisfies $C_S \le c(n, \kappa) \frac{diam}{vol^{1/n}}$.
\end{Thm}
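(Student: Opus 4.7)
The plan is as follows. First, I would rescale the metric by $g \mapsto diam(g)^{-2} g$; this reduces the hypothesis to $diam = 1$ and $Ric \geq -\kappa$, and reduces the conclusion to $C_S \leq c(n,\kappa)\, \vol^{-1/n}$, since both $C_S$ and $diam/\vol^{1/n}$ scale like length under homothety. So from here on one may work on a normalized manifold of unit diameter with Ricci lower bound $-\kappa$.

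Second, under $Ric \geq -\kappa$ with $diam \leq 1$, Bishop--Gromov volume comparison yields a uniform doubling constant $D = D(n,\kappa)$ such that $\vol(B(x,2r)) \leq D\, \vol(B(x,r))$ for all $x \in M$ and $0 < r \leq 2$. The same Ricci lower bound yields Buser's $L^1$-Poincar\'e inequality on balls of radius at most the diameter, again with a constant depending only on $n$ and $\kappa$.

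Third, the combination of doubling and the Poincar\'e inequality implies, by the Saloff-Coste / Grigoryan argument, a Sobolev inequality of the form
\[
\|u\|_{2n/(n-2)}^2 \leq C(n,\kappa)\bigl(\|\nabla u\|_2^2 + r^{-2}\|u\|_2^2\bigr) \vol(B(x,r))^{-2/n}
\]
for $u$ supported in $B(x,r)$. Applying this at the global scale $r = diam = 1$, with $B(x,r) = M$, and rearranging into the form (\ref{2.7}), produces $C_S \leq c(n,\kappa)\, \vol^{-1/n}$.

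The main obstacle is producing the Sobolev inequality in precisely the normalization (\ref{2.7}), with the lower-order term weighted by $\vol^{-1/n}$ rather than a generic $L^2$ term; this requires careful tracking of volume factors through the doubling-and-Poincar\'e machinery and uses that $M$ is itself a single doubling ball at the scale $diam$. An alternative closer to Gallot's original route uses the Li--Yau heat kernel upper bound under $Ric \geq -\kappa$ to obtain $h_t(x,y) \leq c(n,\kappa)\, \vol(M)^{-1} \max(1, t^{-n/2})$ for $0 < t \leq 1$, and then applies the Varopoulos--Nash equivalence between ultracontractivity and the Sobolev inequality, which after undoing the rescaling yields the stated bound.
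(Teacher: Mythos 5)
The paper does not prove this statement at all: Theorem \ref{gallot} is quoted from Gallot's work \cite{Gallot1988}, whose original argument runs through isoperimetric inequalities and symmetrization rather than the doubling-plus-Poincar\'e or heat-kernel routes you propose. So there is no internal proof to compare with; judged on its own, your outline is a reasonable modern way to obtain an estimate of this type. One small inaccuracy: $C_S$ as defined in \eqref{2.7} and the ratio $diam/vol^{1/n}$ are both scale-invariant (dimensionless), not quantities that scale like length; your reduction to $diam=1$, $Ric\ge-\kappa$ is still legitimate, but precisely because both sides of the desired inequality are unchanged under homothety, not because they rescale together. Also, with $diam=1$ the open ball $B(x,1)$ need not be all of $M$; take the scale $r=2\,diam$, which changes nothing else.

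The genuine gap is the point you flag but underestimate. The Saloff-Coste/Grigor'yan local Sobolev inequality, or equally the Varopoulos--Nash argument from the Li--Yau ultracontractivity bound, delivers at the global scale an inequality of the form $\|u\|_{2n/(n-2)}^2 \le C(n,\kappa)\, vol^{-2/n}\bigl(\|\nabla u\|_2^2+\|u\|_2^2\bigr)$, i.e.\ with a constant $C(n,\kappa)>1$ multiplying the zeroth-order term as well. Definition \eqref{2.7} requires the coefficient of $\|u\|_2$ to be exactly $vol^{-1/n}$ (this normalization is saturated by constant functions), and no amount of careful tracking of volume factors through the doubling-and-Poincar\'e machinery will bring that constant down to $1$; interpolation tricks do not do it either. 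You need one further step: apply your inequality to $u-\bar u$, where $\bar u$ is the mean value; absorb the zeroth-order term via a spectral gap bound $\lambda_1\ge c(n,\kappa)>0$ (valid for $diam=1$, $Ric\ge-\kappa$, e.g.\ by Li--Yau, or again from doubling plus Poincar\'e at the global scale), so that $vol^{-1/n}\|u-\bar u\|_2\le \lambda_1^{-1/2} vol^{-1/n}\|\nabla u\|_2$; and then add the mean back using $\|\bar u\|_{2n/(n-2)}=|\bar u|\, vol^{\frac{n-2}{2n}}\le vol^{-1/n}\|u\|_2$ by Cauchy--Schwarz. This yields $\|u\|_{2n/(n-2)}\le c(n,\kappa)\, vol^{-1/n}\|\nabla u\|_2+vol^{-1/n}\|u\|_2$, hence $C_S\le c(n,\kappa)\, diam/vol^{1/n}$ after undoing the normalization; with that addition your outline closes, and it mirrors how one passes from Gallot's mean-free Sobolev inequality to the form \eqref{2.7}.
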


We will apply the Sobolev inequality along the Ricci flow in the following theorem from \cite{Ye21}. (We only need the version of this theorem for compact manifolds.) Like the Sobolev inequality in \cite{Ye15}, it is based on the monotonicity of Perelman's entropy functional \cite{P} and
harmonic analysis of the heat operator. 

\begin{Thm}[\cite{Ye21}]\label{theo_2.2}
Consider a smooth solution of the Ricci flow $g=g(t)$ on a compact manifold $M$ of dimension $n \geq 3$, $t\in [0, T)$ for some $T>0$.  Let $\alpha$ be a constant. Assume 
\begin{equation} \label{2.8}
\|(R-\alpha)^-\|_{n/2}(0) C_S^2(0) \leq 1.
\end{equation} 
 ($0$ indicates the metric $g(0)$. Similar notations will be used for $t$.)  Set
 \begin{equation} \label{2.9} 
\delta_0=C_S^{-2}(0)+\|(R-\alpha)^-\|_{n/2}(0).
\end{equation}
Assume that $t\in[0, T)$ satisfies 
\begin{equation} \label{2.10}
a_n \|(R-\alpha)^+\|_{n/2}(t) C_S^2(0) e^{\frac{2t}{n}(4\delta_0-\alpha)} \leq 1
\end{equation}
for  a suitable constant $a_n\geq 1$ depending only on $n$.  For convenience of presentation we also assume that $vol(g(0))=1$. Then there holds
\begin{equation} \label{2.11}
(\int_M |u|^{\frac{2n}{n-2}})^{\frac{n-2}{n}} \leq c(n) e^{\frac{2t}{n}(4\delta_0-\alpha)} (C_S^2(0)\int_M |\nabla u|^2 +\int_M u^2)
\end{equation}
at time $t$ for all $u \in W^{1,2}(M)$ and a suitable positive constant $c(n)$.
In general, the $vol(g(0))$-rescaled version of \eqref{2.11} holds true without 
the condition $vol(g(0))=1$.
\end{Thm}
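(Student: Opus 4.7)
The plan is to follow the strategy developed by Ye in the references cited, proceeding in three steps: (i) convert the time-$0$ Sobolev inequality into a logarithmic Sobolev inequality, (ii) propagate the log-Sobolev inequality forward in time using Perelman's entropy monotonicity, and (iii) convert the resulting log-Sobolev inequality at time $t$ back into a Sobolev inequality via heat-kernel/Davies--Varopoulos arguments. The non-standard $\alpha$-shift and the negative/positive-part scalar curvature conditions are accommodated throughout.

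For the first step, I would apply the classical Gross--Rothaus conversion: the Sobolev inequality that defines $C_S(0)$ implies that for every $\tau>0$ and every $u\in W^{1,2}(M)$ with $\int u^2=1$,
\[
\int_M u^2\log u^2 \le \tau \int_M |\nabla u|^2 + \beta(\tau, C_S(0)),
\]
where $\beta(\tau,C_S(0)) = -\tfrac{n}{2}\log\tau + c(n)\log C_S(0)^2 + c(n)$ (using $\mathrm{vol}(g(0))=1$). Rewritten in terms of the shifted scalar curvature this becomes a lower bound on the shifted $\mu$-invariant $\mu_\alpha(g(0),\tau)$ obtained by replacing $R$ with $R-\alpha$ in Perelman's $\mathcal{W}$-functional; the contribution of the term $\int (R-\alpha)^- u^2$ is absorbed using Hölder and the Sobolev embedding together with the standing hypothesis $\|(R-\alpha)^-\|_{n/2}(0)\,C_S^2(0)\le 1$, which is exactly the smallness needed to make the absorption lossless up to a constant factor that gets encoded into $\delta_0$.

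For the second step, I would invoke Perelman's monotonicity for the shifted entropy along the Ricci flow (which, for $\alpha\ne 0$, has an explicit $e^{-2\alpha t/n}$-type scaling correction). Comparing the reference times $0$ and $t$ yields a log-Sobolev inequality at time $t$ of the form
\[
\int_M u^2 \log u^2 \le \tau \int_M \bigl(4|\nabla u|^2 + (R-\alpha) u^2\bigr) + \tilde\beta(\tau,t),
\]
whenever $\int_M u^2 = 1$, where $\tilde\beta$ differs from $\beta$ by the exponential scaling factor $e^{\frac{2t}{n}(4\delta_0-\alpha)}$; the parameter $4\delta_0-\alpha$ arises from combining the $(R-\alpha)^-$ absorption with the entropy's natural $\alpha$-scaling.

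For the third step, I would convert back to a Sobolev inequality by Davies' semigroup technique: optimising over $\tau$ produces an ultracontractivity-type heat-kernel bound, and these bounds yield the $L^{2n/(n-2)}$ Sobolev inequality \eqref{2.11} with the desired constant $c(n)e^{\frac{2t}{n}(4\delta_0-\alpha)}C_S^2(0)$. The positive-part hypothesis (\ref{2.10}) enters precisely here, when the term $\int (R-\alpha)^+ u^2$ appearing on the right of the log-Sobolev inequality must be re-absorbed into the $L^{2n/(n-2)}$ norm; smallness of $\|(R-\alpha)^+\|_{n/2}(t)\,C_S^2(0)e^{\frac{2t}{n}(4\delta_0-\alpha)}$ is exactly the threshold that makes this absorption work. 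The general statement without $\mathrm{vol}(g(0))=1$ follows by the usual rescaling $g\mapsto \mathrm{vol}(g(0))^{-2/n}g$.

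The main obstacle is the third step: carrying out the log-Sobolev to Sobolev conversion so that the scalar-curvature term on the right is controlled without picking up extra powers of the exponential factor, and so that the final constant depends polynomially (in fact, linearly up to $c(n)$) on $C_S^2(0)$. Keeping $\alpha$ generic throughout, rather than specialising to $\alpha=0$, requires careful bookkeeping of the scaling corrections in Perelman's monotonicity and in the heat-semigroup estimate, but introduces no essentially new difficulty once the $\alpha=0$ case is in hand.
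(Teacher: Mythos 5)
This theorem is not proved in the paper at all: it is imported verbatim from \cite{Ye21}, and the authors only remark that it rests on the monotonicity of Perelman's entropy functional and harmonic analysis of the heat operator. Your three-step outline (Sobolev $\Rightarrow$ log-Sobolev at $t=0$ with the $(R-\alpha)^-$ term absorbed using \eqref{2.8}, propagation via Perelman's entropy monotonicity with the $\alpha$-shift, then Davies-type ultracontractivity to return to a Sobolev inequality, with \eqref{2.10} used to re-absorb the $(R-\alpha)^+\,u^2$ term) is exactly the strategy of Ye's work that the paper points to, so in spirit you are on the same route as the cited source.

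However, as submitted this is a plan rather than a proof. The entire analytic content of the theorem lives in the steps you defer: the uniform log-Sobolev inequality along the flow with the precise constant $\delta_0=C_S^{-2}(0)+\|(R-\alpha)^-\|_{n/2}(0)$ and the factor $e^{\frac{2t}{n}(4\delta_0-\alpha)}$ is asserted ("the parameter $4\delta_0-\alpha$ arises from combining...") rather than derived, and you explicitly flag the log-Sobolev-to-Sobolev conversion --- controlling the time-$t$ scalar-curvature term without degrading the exponential factor and landing on a constant linear in $C_S^2(0)$ --- as ``the main obstacle'' without resolving it. That conversion requires heat-kernel bounds for the time-dependent operator $-\Delta+\tfrac{1}{4}(R-\alpha)$ along the flow and a quantitative absorption argument in which the threshold constant $a_n$ in \eqref{2.10} is actually produced; none of this is carried out. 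So the proposal correctly identifies the known strategy of \cite{Ye21} but does not constitute an independent proof of the statement.
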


\begin{Cor}\label{cor_2.1}
Consider the set-up of Theorem \ref{theo_2.2}. (In particular we assume $vol(g(0))=1$.)  Assume 
\begin{equation} \label{2.12}
 \|Rm\|_{n/2}(0) C_S^2(0) \leq \frac{1}{n(n-1)}
\end{equation}
instead of \eqref{2.8} and, at time $t$,
\begin{equation} \label{2.13}
a_n \|Rm\|_{n/2}(t) C_S^2(0) e^{\frac{8 \delta_0 t}{n}} \leq \frac{1}{n(n-1)}
\end{equation}
instead of \eqref{2.10}, where  $\alpha=0$ in the definition \eqref{2.9} of $\delta_0$. Then \eqref{2.11} with $\alpha=0$ holds true at
 time $t$ for all $u \in W^{1,2}(M)$.
\end{Cor}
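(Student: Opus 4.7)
The plan is to reduce Corollary \ref{cor_2.1} to Theorem \ref{theo_2.2} applied with $\alpha=0$. The hypotheses \eqref{2.8} and \eqref{2.10} of Theorem \ref{theo_2.2} are phrased in terms of the scalar curvature $R$, whereas the assumptions of the corollary are phrased in terms of the full Riemann curvature tensor $Rm$. So the entire task is to pass from $Rm$ to $R$ via a pointwise comparison, checking that the new constant $\frac{1}{n(n-1)}$ in \eqref{2.12} and \eqref{2.13} absorbs this loss.

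The key (and only) geometric input is the standard pointwise bound
\[
|R| \leq n(n-1)\,|Rm|,
\]
which one sees by picking an orthonormal frame, writing $R = \sum_{i\neq j} R_{ijij}$ as a sum of $n(n-1)$ sectional curvature terms, and using $|R_{ijij}| = |K(e_i, e_j)| \leq |Rm|$ together with the triangle inequality. Integrating and taking $n/2$-th roots yields $\|R^\pm\|_{n/2} \leq n(n-1)\|Rm\|_{n/2}$ at every time slice.

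With this comparison in hand, the verification is mechanical. First, \eqref{2.12} together with $\|R^-\|_{n/2}(0) \leq n(n-1)\|Rm\|_{n/2}(0)$ gives $\|R^-\|_{n/2}(0) C_S^2(0) \leq 1$, which is exactly \eqref{2.8} with $\alpha=0$; so the quantity $\delta_0 = C_S^{-2}(0) + \|R^-\|_{n/2}(0)$ of \eqref{2.9} with $\alpha=0$ is well-defined and matches the $\delta_0$ used in \eqref{2.13}. Second, applying the same comparison at time $t$,
\[
a_n \|R^+\|_{n/2}(t) C_S^2(0) e^{8\delta_0 t/n} \leq n(n-1)\, a_n \|Rm\|_{n/2}(t) C_S^2(0) e^{8\delta_0 t/n} \leq 1
\]
by \eqref{2.13}, which is precisely hypothesis \eqref{2.10} with $\alpha=0$. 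Theorem \ref{theo_2.2} then delivers \eqref{2.11} with $\alpha=0$ at time $t$. There is no real obstacle here: the content is simply that the constant $\frac{1}{n(n-1)}$ in the corollary's hypotheses was chosen precisely to compensate for the factor lost in bounding $R$ by $Rm$.
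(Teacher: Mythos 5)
Your proposal is correct and is exactly the argument the paper leaves implicit: the pointwise bound $|R|\leq n(n-1)|Rm|$ in an orthonormal frame gives $\|R^{\mp}\|_{n/2}\leq n(n-1)\|Rm\|_{n/2}$, so \eqref{2.12} and \eqref{2.13} imply \eqref{2.8} and \eqref{2.10} with $\alpha=0$ (note the exponent $e^{\frac{2t}{n}(4\delta_0-\alpha)}$ becomes $e^{\frac{8\delta_0 t}{n}}$), and Theorem \ref{theo_2.2} applies; this same factor $n(n-1)$ is the one the authors invoke later in \eqref{3.19}. No issues.
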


Except in Theorem \ref{thm_4.1}, henceforth $\alpha$ is chosen to be zero in the definition of $\delta_0$. 

\section{Some Integral Estimates for $Rm$}  

\subsection{}

Now we consider a smooth solution $g=g(t), t \in [0, T)$ of the Ricci flow on a compact manifold $M$ of dimension $n \geq 3$. Assume $vol(0)=1$ and \eqref{2.12}.  For a time $t\in (0, T)$ satisfying \eqref{2.13} we apply \eqref{2.11} to
 $u=|Rm|^{p/2}$ to deduce 
\begin{equation} \label{3.1}
(\int_M |Rm|_{}^{p \cdot \frac{n}{n-2}})^{\frac{n-2}{n}}\leq c(n) e^{\frac{8t}{n}\delta_0} (C_S^2(0) \int_M |\nabla 
|Rm|_{}^{p/2}|^2 +\int |Rm|_{}^{p})  
\end{equation}
at $t$ for $p\geq 1$. In particular we have 
\begin{equation} \label{3.2}
(\int_M |Rm|_{}^{\frac{n}{2} \cdot \frac{n}{n-2}})^{\frac{n-2}{n}}\leq c(n) e^{\frac{8t}{n}\delta_0} (C_S^2(0)\int_M |\nabla |Rm|_{}^{n/4}|^2 +\int |Rm|_{}^{n/2})
\end{equation}
at $t$. Employing \eqref{2.5} and \eqref{3.1} we then deduce
\begin{equation} \label{3.3}
\int_M |Rm|_{}^{p+1} \leq c(n)e^{\frac{8t}{n}\delta_0}\|Rm\|_{n/2} (C_S^2(0) \int_M |\nabla |Rm|^{p/2}|^2+\int_M |Rm|_{}^{p})
\end{equation}
at $t$.  In particular we have 
\begin{equation} \label{3.4}
\int_M |Rm|_{}^{\frac{n}{2}+1} \leq c(n)e^{\frac{8t}{n}\delta_0}\|Rm\|_{n/2} (C_S^2(0) \int_M |\nabla |Rm|_{}^{n/4}|^2+\int_M |Rm|_{}^{n/2}).
\end{equation} 
at $t$.
\subsection{}  

 Set 
\begin{equation} \label{3.5}
J(t)=\left.\int_M |Rm|_{}^{n/2} \right|_t,\quad
\theta_t=\|Rm\|_{n/2}(t) C_S^2(0),\quad
 \chi(t)= c(n) e^{\frac{8t}{n}\delta_0} \| Rm\|_{n/2}(t),
\end{equation}
where $c(n)$ is from \eqref{2.11}.
 Now we consider a time $t\in (0, T)$ which satisfies the following condition
\begin{equation} \label{3.6}
c(n) e^{\frac{8t}{n}\delta_0} \|Rm\|_{n/2}(t) C_S^2(0)\leq \frac{1}{n(n-1)},  
\end{equation} 
where $c(n)=\max\{a_n,  \tilde c(n)\}$ with $a_n$ from \eqref{2.10} and $\tilde c(n)$ standing for the $c(n)$ in \eqref{2.11}.  We deduce from \eqref{2.4} and \eqref{3.4}
\begin{equation} \label{3.7}
\frac{\partial }{\partial t} \int_M |Rm|_{}^{n/2} \leq  -\frac{3(n-2)}{n} \int_M |\nabla |Rm|_{}^{n/4}|^2+c(n)e^{\frac{8t}{n}\delta_0}  
\|Rm\|_{n/2}
\int_M |Rm|_{}^{n/2},
\end{equation}
i.e.
\begin{equation} \label{3.8}
J'(t)\leq  -\frac{3(n-2)}{n} \int_M |\nabla |Rm|_{}^{n/4}|^2+ c(n)e^{\frac{8t}{n}\delta_0}  J(t)^{1+\frac{2}{n}}.
\end{equation}

Next we consider $t>0$ such that for all $0<s\leq t$, \eqref{3.6} holds true with $t$ replaced by $s$. 
Then we can integrate \eqref{3.8} to deduce 
\begin{equation} \label{3.9}
 J(t) e^{-\int_0^t \chi}+\frac{3(n-2)}{n}\int_0^t e^{-\int_0^s \chi}\int_M |\nabla |Rm|_{}^{n/4}|^2 \leq 
J(0).
\end{equation} 

One simple consequence of \eqref{3.9} is the following estimate
\begin{equation} \label{3.10}
J(t) \leq e^{\int_0^t \chi} J(0) 
\end{equation}
There holds 
\begin{equation} \label{3.11}
\int_0^t \chi \leq c(n) e^{\frac{8t}{n}\delta_0} \int_0^t \|Rm\|_{n/2}.
\end{equation}
To proceed, we consider $t>0$ such that \eqref{3.6} with $t$ replaced by $s$ and the inequality
\begin{equation} \label{3.12}
J(s) \leq 2^{n/2} J(0)
\end{equation}
holds true for all $s \in [0, t]$. There holds
\[
\int_0^t \|Rm\|_{n/2} \leq 2 t\|Rm\|_{n/2}(0).
\]
Hence we deduce
\begin{equation} \label{3.13}
J(t)\leq exp(2 c(n)e^{\frac{8t}{n}\delta_0}t \|Rm\|_{n/2}(0)) J(0).
\end{equation}
Alternatively, we can drop the first term on the right hand side of \eqref{3.8} and then integrate it to deduce an estimate which can be used instead of \eqref{3.9}.

We formulate the above estimate as a lemma.

\begin{Lem}\label{lem_3.1}
Let $g=g(t)$ be a smooth solution of the Ricci flow on a compact manifold $M$ of dimension $n \geq 3$ and the time interval $[0, T)$ for some $T>0$. Assume \eqref{2.12} and $vol(g(0))=1$. Let $t\in [0,T)$. Assume that for each $s\in[0, t]$, the inequality \eqref{3.6} with $t$ replaced by $s$ and the inequality \eqref{3.12} hold true. Then the estimate \eqref{3.13} holds true.
\end{Lem}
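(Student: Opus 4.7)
The plan is a Grönwall-type bootstrap based on the scalar differential inequality for $J$ derived in the lead-up, closed using the a priori hypothesis \eqref{3.12}. Since virtually all of the analytic work is already in place in equations \eqref{3.1}--\eqref{3.11}, the task reduces to assembling these ingredients in the right order.

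First, I would note that because \eqref{3.6} holds with $t$ replaced by every $s\in[0,t]$, Corollary \ref{cor_2.1} applies at each such time, so the Sobolev inequality \eqref{2.11} with $\alpha=0$ is available along the flow up to time $t$. Applying \eqref{2.11} to $u=|Rm|^{n/4}$ and combining with the Hölder bound \eqref{2.6} and the evolution inequality \eqref{2.4} produces \eqref{3.4} and hence the scalar ODI \eqref{3.8} for $J(s)$, with coefficient $\chi(s)=c(n)e^{\frac{8s}{n}\delta_0}\|Rm\|_{n/2}(s)$.

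Second, I would discard the nonpositive gradient term from \eqref{3.8} and run the standard integrating-factor argument against $e^{-\int_0^s\chi}$, which yields the crude exponential bound $J(t)\leq \exp\!\bigl(\int_0^t\chi\bigr)\,J(0)$ recorded in \eqref{3.10}. Thus the whole matter reduces to controlling $\int_0^t\chi$.

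Third, this is exactly where the a priori hypothesis \eqref{3.12} enters: taking $2/n$-th powers of $J(s)\leq 2^{n/2}J(0)$ gives $\|Rm\|_{n/2}(s)\leq 2\|Rm\|_{n/2}(0)$ uniformly on $[0,t]$, so $\int_0^t\|Rm\|_{n/2}\leq 2t\|Rm\|_{n/2}(0)$. Substituting into \eqref{3.11} gives $\int_0^t\chi\leq 2c(n)e^{\frac{8t}{n}\delta_0}\,t\,\|Rm\|_{n/2}(0)$, and plugging this back into \eqref{3.10} is precisely \eqref{3.13}. There is no genuine obstacle inside the lemma itself, since the bootstrap is closed by assumption; the real work — showing that \eqref{3.6} and \eqref{3.12} actually propagate from the initial smallness \eqref{2.12} via a continuity argument — is deferred and will presumably be carried out in the next subsection, with Lemma \ref{lem_3.1} serving as its analytic engine.
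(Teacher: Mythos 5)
Your proposal is correct and follows essentially the same route as the paper: under \eqref{3.6} at all earlier times one gets the ODI \eqref{3.8}, Gr\"onwall (dropping the nonnegative gradient term, which the paper explicitly notes as an equivalent alternative to \eqref{3.9}) gives \eqref{3.10}, and the a priori bound \eqref{3.12} turns \eqref{3.11} into $\int_0^t\chi\leq 2c(n)e^{\frac{8t}{n}\delta_0}t\|Rm\|_{n/2}(0)$, yielding \eqref{3.13}. The continuity/bootstrap step you defer is indeed carried out separately in the proof of Lemma \ref{lem_3.2}, exactly as you anticipate.
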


The next lemma follows from Lemma \ref{lem_3.1}.

\begin{Lem}\label{lem_3.2}
For each $n \geq 3$ and each $\gamma>0$, there exists a positive constant $\varepsilon(n, \gamma)$ depending only on $n$ and $\gamma$ with the following property.  Let $g=g(t)$ be a smooth solution of the Ricci flow on a compact manifold $M$ of dimension $n \geq 3$, $ t\in [0, T)$ for some $T>0$, such that 
\begin{equation} \label{3.14}
\|Rm\|_{n/2}(0) C_S^2(0) \leq \varepsilon(n, \gamma).
\end{equation}
Set 
\begin{equation} \label{3.15}
T_0=\gamma vol(0)^{2/n} C_S^2(0).
\end{equation}
Then there holds
\begin{equation} \label{3.16}
\|Rm\|_{n/2}(t) \leq 2 \|Rm\|_{n/2}(0)
\end{equation}
for all $t \in [0, T_0] \cap [0, T)$.
\end{Lem}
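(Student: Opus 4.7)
The plan is a straightforward continuity (bootstrap) argument built on top of Lemma \ref{lem_3.1}, applied after reducing to the normalized case $\vol(g(0))=1$.

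First, I would reduce to the case $\vol(g(0)) = 1$ by rescaling. Since $\|Rm\|_{n/2}$ and $C_S^2$ are scale-invariant, the hypothesis \eqref{3.14} is preserved, and $T_0 = \gamma\, \vol(g(0))^{2/n} C_S^2(g(0))$ rescales correctly with $t$ along the Ricci flow. In particular after normalization $T_0 = \gamma\, C_S^2(0)$.

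Next, I would set up the continuity argument. Let
\[
S = \sup\bigl\{t \in [0,T_0] \cap [0,T) : J(s) \leq 2^{n/2} J(0) \text{ for all } s \in [0,t] \bigr\}.
\]
By continuity of $J$ and the initial bound, $S>0$. I claim that if $\varepsilon(n,\gamma)$ is chosen small enough, then on $[0,S]$ one can apply Lemma \ref{lem_3.1} and obtain a \emph{strict} improvement $J(s) < 2^{n/2} J(0)$, forcing $S = T_0 \wedge T$ by a standard open--closed argument.

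To verify the hypotheses of Lemma \ref{lem_3.1} on $[0,S]$, note that condition \eqref{3.12} holds by definition of $S$. For \eqref{3.6}, I would first control the exponential factor. Since $\alpha=0$ and $|R| \leq c(n)|Rm|$, we have
\[
\delta_0 \;\leq\; C_S^{-2}(0) + c(n)\|Rm\|_{n/2}(0) \;\leq\; C_S^{-2}(0)\bigl(1 + c(n)\varepsilon(n,\gamma)\bigr),
\]
so for $s \leq T_0 = \gamma C_S^2(0)$,
\[
\tfrac{8s}{n}\delta_0 \;\leq\; \tfrac{8\gamma}{n}\bigl(1+c(n)\varepsilon(n,\gamma)\bigr),
\]
which is bounded by a constant $C_1(n,\gamma)$ as long as $\varepsilon(n,\gamma) \leq 1$. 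Combining with $\|Rm\|_{n/2}(s) C_S^2(0) \leq 2\varepsilon(n,\gamma)$ on $[0,S]$, condition \eqref{3.6} reduces to $2 c(n) e^{C_1(n,\gamma)} \varepsilon(n,\gamma) \leq \frac{1}{n(n-1)}$, satisfied for $\varepsilon(n,\gamma)$ small depending only on $(n,\gamma)$.

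Finally, applying estimate \eqref{3.13} from Lemma \ref{lem_3.1} on $[0,S]$, and using $s \leq T_0 = \gamma C_S^2(0)$ together with $\|Rm\|_{n/2}(0) C_S^2(0) \leq \varepsilon(n,\gamma)$, gives
\[
J(s) \;\leq\; \exp\!\bigl(2c(n)\, e^{C_1(n,\gamma)} \gamma\, \varepsilon(n,\gamma)\bigr) J(0).
\]
Shrinking $\varepsilon(n,\gamma)$ further so that the exponent is less than, say, $\frac{n}{4}\log 2$, yields $J(s) \leq 2^{n/4} J(0) < 2^{n/2} J(0)$ strictly. By continuity of $J$, this strict inequality extends slightly past $S$, contradicting the definition of $S$ unless $S = T_0 \wedge T$, which gives \eqref{3.16}. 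The main (mild) obstacle is organizing the two-parameter smallness: $\varepsilon(n,\gamma)$ must be chosen small enough to simultaneously (i) absorb the $e^{\frac{8s}{n}\delta_0}$ factor into condition \eqref{3.6}, and (ii) make the exponent in \eqref{3.13} strictly smaller than $\frac{n}{2}\log 2$; but both constraints involve only $(n,\gamma)$, so a single $\varepsilon(n,\gamma)$ suffices.
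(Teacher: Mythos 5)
Your proof is correct and follows essentially the same route as the paper: a continuity/bootstrap argument built on Lemma \ref{lem_3.1}, with $\varepsilon(n,\gamma)$ chosen small enough both to keep condition \eqref{3.6} valid along the bootstrap set and to make the exponential factor in \eqref{3.13} strictly below $2^{n/2}$. Your bookkeeping is in fact slightly more direct---you bound $t\delta_0\leq \gamma(1+c(n)\varepsilon)$ straight from $t\leq \gamma C_S^2(0)$, whereas the paper routes this through an auxiliary time $T_1$ and checks $T_1=T_0$ at the end---and only the trivial case $J(0)=0$ (noted explicitly in the paper) is left implicit in your argument.
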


Note that one can choose $\gamma=1$ in this lemma.

\begin{proof}
Let $n\geq 3$ and $\gamma>0$ be given. 
We first define 
\begin{equation} \label {3.17}
T_1=vol(0)^{2/n}\min \{\gamma C_S^2(0), \frac{c(n, \gamma)}{\|Rm\|_{n/2}(0)}\},
\end{equation}
where $c(n, \gamma)$ is to be determined.  Set
\begin{equation} \label{3.18}
\varepsilon(n, \gamma)=\min\{\frac{n-2}{nc(n)}, \frac{1}{n(n-1)}, b(n, \gamma)\},
\end{equation} 
where $c(n)$ is from \eqref{3.6} and $b(n, \gamma)$ is to be defined. 

Consider a solution $g=g(t)$ of the Ricci flow as stated in the theorem, which in particular satisfies \eqref{3.14}. By a rescaling we can assume  $vol(0)=1$. The case $J(0)=0$ is trivial. So we assume $J(0)>0$.  Define
$I=\{t\in [0, T_1) \cap [0, T):
 J(s) \leq 2^n J(0),  c(n) e^{\frac{8t}{n}\delta_0} \|Rm\|_{n/2}(t) C_S^2(0)\leq 
\frac{1}{n(n-1)}
\mbox{ for all } s \in [0, t]\}, $
where $c(n)$ is again from \eqref{3.6}. 
Then $I$ is closed in $[0, T_1) \cap [0, T)$. It is readily checked that $0 \in I$.  
Let $t \in I$. There holds
\begin{equation} \label{3.19}
t\delta_0<T_1(C_S^{-2}(0)+n(n-1) \|Rm\|_{n/2}(0))\leq \gamma+n(n-1) c(n, \gamma).
\end{equation}
By Lemma \ref{lem_3.1} we infer
\begin{align} \label{3.20}
J(t)&\leq exp\left(2c(n)e^{\frac{8t}{n}\delta_0}t \|Rm\|_{n/2}(0)\right) J(0)
\\
&< 
exp\left (2c(n)e^{\frac{8}{n}(\gamma+n(n-1) c(n, \gamma))} c(n, \gamma)\right)J(0).\notag
\end{align}
We define  $c(n, \gamma)$ to be the unique solution of the equation
\begin{equation} \label{3.21}
exp\left (2c(n)e^{\frac{8}{n}(\gamma+n(n-1) x)}x\right)=2^{n}.
\end{equation}
Then we deduce 
\begin{equation} \label{3.22} 
J(t)< 2^{n} J(0).
\end{equation}
 It then also follows that
\[
c(n)e^{\frac{8t}{n}\delta_0} \|Rm\|_{n/2}(t)C_S^2(0) < 2c(n)e^{\frac{8}{n}(\gamma+n(n-1)c(n, \gamma))} \|Rm\|_{n/2}(0) C_S^2(0).
\]
We set
\begin{equation} \label{3.23}
b(n, \gamma)=\min\left\{\frac{1}{2n(n-1)c(n)} e^{-\frac{8}{n}(\gamma+n(n-1)c(n, \gamma))}, \frac{c(n, \gamma)}{\gamma}\right\}
\end{equation}
and deduce 
\begin{equation}
c(n)e^{\frac{8t}{n}\delta_0} \|Rm\|_{n/2}(t)C_S^2(0)<\frac{1}{n(n-1)}.
\end{equation}
It follows that $I$ is open in $[0, T_1)\cap [0, T)$. Consequently, $I=[0, T_1) \cap [0,T)$, and hence $J(t) < 2^n J(0)$ for all $t \in [0, T_1)\cap [0, T)$. Finally, by \eqref{3.14} and the definitions \eqref{3.18} and \eqref{3.23} we infer 
\[
\frac{c(n, \gamma)}{\|Rm\|_{n/2}(0)}\geq \gamma C_S^2(0),
\]
and hence $T_1=T_0$.  It follows that the desired estimate \eqref{3.16} holds true for all $t\in [0, T_0) \cap [0, T)$. By continuity, it also holds true for all $t \in [0, T_0] \cap [0, T]$.
\end{proof}

\subsection{} 

For $n \geq 3$ set $p_0=p_0(n)= \frac{n^2}{n-2}$.

\begin{Lem}\label{lem_n2_control} Assume the same as in Lemma \ref{lem_3.2}. In addition, assume that $vol(0)=1$. For each $t\in (0, T_0] \cap (0, T)$ there exists a $t^*\in [t/3, t/2]$ such that 
\begin{equation} \label{3.24}
\|Rm\|_{p_0/2}(t^*) \leq c(n, \gamma)  \|Rm\|_{n/2}(0)\left(\frac{C_S^2(0)}{t}+1\right)^{2/n}
\end{equation}
for a suitable positive constant $c(n, \gamma)$ depending only on $n$ and $\gamma$. 
\end{Lem}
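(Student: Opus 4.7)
The plan is to combine the Sobolev inequality at a time $t^*$ applied to $u = |Rm|^{n/4}$ with a spacetime integral bound on $\int_M |\nabla |Rm|^{n/4}|^2$, and then pick $t^*$ using averaging over a time subinterval.

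First, I would note that by the definition $p_0/2 = (n/2)\cdot n/(n-2)$, so $\|Rm\|_{p_0/2}^{n/2} = \bigl(\int_M |Rm|^{(n/2)\cdot n/(n-2)}\bigr)^{(n-2)/n}$, which is precisely the quantity controlled by \eqref{3.2}. Therefore, at any time $s \in [0, T_0] \cap [0,T)$,
\begin{equation*}
\|Rm\|_{p_0/2}^{n/2}(s) \leq c(n) e^{\frac{8s}{n}\delta_0}\Bigl(C_S^2(0)\int_M |\nabla |Rm|^{n/4}|^2 + J(s)\Bigr).
\end{equation*}
So to bound $\|Rm\|_{p_0/2}(t^*)$, it suffices to bound $\int_M|\nabla|Rm|^{n/4}|^2$ at some good time $t^*$, together with $J(t^*)$.

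Next, to obtain a good $t^*$, I would integrate the differential inequality \eqref{3.8} in time. Since we are under the hypotheses of Lemma \ref{lem_3.2}, the conclusion $\|Rm\|_{n/2}(s) \leq 2\|Rm\|_{n/2}(0)$ holds on $[0, T_0] \cap [0,T)$, which gives $J(s) \leq 2^{n/2} J(0)$ and, combined with $T_0 = \gamma C_S^2(0)$ and the smallness assumption \eqref{3.14}, yields a uniform bound $\int_0^s \chi \leq C(n,\gamma)$ for $s \leq T_0$. Hence $e^{-\int_0^s\chi}$ is bounded below by a positive constant depending only on $n,\gamma$. Integrating \eqref{3.8} (or applying \eqref{3.9}) then gives
\begin{equation*}
\int_0^t \int_M |\nabla |Rm|^{n/4}|^2 \, ds \leq c(n,\gamma)\, J(0)
\end{equation*}
for all $t \in (0, T_0]\cap (0,T)$.

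Now I would use a pigeonhole/averaging step on the subinterval $[t/3, t/2]$ of length $t/6$: there must exist $t^* \in [t/3, t/2]$ with
\begin{equation*}
\int_M |\nabla |Rm|^{n/4}|^2(t^*) \leq \frac{6}{t}\int_{t/3}^{t/2}\int_M|\nabla|Rm|^{n/4}|^2\, ds \leq \frac{c(n,\gamma)\, J(0)}{t}.
\end{equation*}
Plugging this and $J(t^*) \leq 2^{n/2}J(0)$ into the Sobolev estimate above, and absorbing $e^{\frac{8t^*}{n}\delta_0}$ into $c(n,\gamma)$ using $t^* \leq T_0$, gives
\begin{equation*}
\|Rm\|_{p_0/2}^{n/2}(t^*) \leq c(n,\gamma)\, J(0)\Bigl(\frac{C_S^2(0)}{t}+1\Bigr),
\end{equation*}
and taking the $2/n$ power together with $J(0)^{2/n} = \|Rm\|_{n/2}(0)$ yields \eqref{3.24}. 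No step presents a real obstacle; the only mildly delicate point is checking that the uniform lower bound on $e^{-\int_0^s \chi}$ and the uniform bound on $e^{\frac{8s}{n}\delta_0}$ on $[0,T_0]$ depend only on $n$ and $\gamma$, which follows directly from \eqref{3.14} and the definition of $T_0$.
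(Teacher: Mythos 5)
Your proposal is correct and follows essentially the same route as the paper: identify $\|Rm\|_{p_0/2}^{n/2}(t^*)$ with the left side of \eqref{3.2}, use \eqref{3.9} (with $\int_0^s\chi$ bounded by a constant depending only on $n,\gamma$ via \eqref{3.14} and $T_0=\gamma C_S^2(0)$) to bound the spacetime integral of $\int_M|\nabla|Rm|^{n/4}|^2$ by $c(n,\gamma)J(0)$, pick $t^*\in[t/3,t/2]$ by averaging, and conclude with $J(t^*)\leq 2^{n/2}J(0)$ from Lemma \ref{lem_3.2}. The only cosmetic difference is that the paper restricts the time integral in \eqref{3.9} to $[t/3,t/2]$ directly rather than bounding $e^{-\int_0^s\chi}$ from below on all of $[0,t]$, which is the same estimate.
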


The estimate \eqref{3.24} needs to be modified by a factor if the condition $vol(0)$ is dropped. This remark also applies to a number of estimates in the sequel.

\begin{proof}[Proof of Lemma \ref{lem_n2_control}]
By \eqref{3.9} we deduce for $t \in (0, T_0]\cap [0, T)$
\begin{equation} \label{3.25}
J(t)+ \frac{3(n-2)}{n} e^{\int_{t/2}^t \chi }\int_{t/3}^{t/2} \int_M |\nabla |Rm|_{}^{n/4}|^2 \leq e^{\int_0^t \chi} 
J(0).
\end{equation}
Consequently there holds 
\begin{equation} \label{3.26}
\frac{3(n-2)}{n} \int_{t/3}^{t/2} \int_M |\nabla |Rm|_{}^{n/4}|^2 \leq e^{\int_0^{t/2} \chi} J(0).
\end{equation}
Hence there exists a $t^*\in [t/3, t/2]$ such that 
\[
\int_M |\nabla |Rm|^{n/4}|^2 |_{t^*} \leq \frac{2n}{n-2} \cdot \frac{1}{t}  e^{\int_0^{t/2} \chi} J(0).
\]
Since $t\leq T_0$, applying the estimates \eqref{3.14}--\eqref{3.16} we infer 
\begin{equation} \label{3.27}
\int_M |\nabla |Rm|^{n/4}|^2 |_{t^*}\leq \frac{c(n, \gamma)}{t} J(0)
\end{equation}
for a new $c(n, \gamma)>0$. Applying \eqref{3.2} and \eqref{3.16} we deduce
\begin{align}
(\int_M |Rm|_{}^{\frac{n}{2} \cdot \frac{n}{n-2}})^{\frac{n-2}{n}}|_{t^*} &\leq c(n) e^{\frac{8t}{n}\delta_0} (C_S^2(0)\frac{c(n, \gamma)}{t} J(0) +\int |Rm|_{}^{n/2})
\notag 
\\
&\leq c'(n, \gamma)\int_M |Rm|^{n/2}|_0 \left(\frac{C_S^2(0)}{t}+1\right) \notag
\end{align}
for a constant $c'(n, \gamma)>0$, which yields \eqref{3.24}.
\end{proof}

\begin{Lem}\label{lem_3.4}
Assume the same as in Lemma \ref{lem_3.2}, except that the constant 
$\varepsilon(n, \gamma)$ in \eqref{3.14} is replaced by $\varepsilon_1(n, \gamma)$ given in \eqref{3.29} below. In addition, assume $vol(0)=1$.Let $t \in (0, T_0] \cap (0, T)$. Then there holds 
\begin{equation}
\|Rm\|_{p_0/2} \leq c(n, \gamma) \|Rm\|_{n/2} (0)(\frac{C_S^2(0)}{t}+1)^{2/n}
\end{equation}
on $[t/2, t]$ for a constant $c(n, \gamma)>0$.
\end{Lem}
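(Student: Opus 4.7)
The plan is to combine Lemma \ref{lem_n2_control}, which furnishes the $L^{p_0/2}$ bound at a single well-chosen instant $t^{\ast}\in[t/3,t/2]$, with an energy-type Gronwall argument that propagates this bound forward over $[t^{\ast},t]\supset[t/2,t]$.

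First I would apply Lemma \ref{lem_n2_control} to produce $t^{\ast}\in[t/3,t/2]$ with $\|Rm\|_{p_0/2}(t^{\ast})\leq c(n,\gamma)\|Rm\|_{n/2}(0)(C_S^2(0)/t+1)^{2/n}$. Next I would plug $p=p_0/2$ into \eqref{2.3} and control the resulting term $\int_M|Rm|^{p_0/2+1}$ using \eqref{3.3} with the same $p$. The outcome is an evolution inequality of the shape
\[
\frac{d}{ds}\int_M|Rm|^{p_0/2}
\leq \Bigl(c(n)e^{8s\delta_0/n}\|Rm\|_{n/2}(s)C_S^2(0)-\frac{4(p_0/2-1)}{p_0/2}\Bigr)\int_M|\nabla|Rm|^{p_0/4}|^2
+c(n)e^{8s\delta_0/n}\|Rm\|_{n/2}(s)\int_M|Rm|^{p_0/2}.
\]
Since $s\leq T_0=\gamma C_S^2(0)$ forces $s\delta_0\leq \gamma+\gamma\varepsilon_1$, the factor $e^{8s\delta_0/n}$ is bounded by a constant depending only on $n,\gamma$, and Lemma \ref{lem_3.2} gives $\|Rm\|_{n/2}(s)\leq 2\|Rm\|_{n/2}(0)$. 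Choosing $\varepsilon_1(n,\gamma)\leq\varepsilon(n,\gamma)$ sufficiently small then makes the parenthesized coefficient nonpositive, reducing the inequality to $\frac{d}{ds}\int_M|Rm|^{p_0/2}\leq c(n,\gamma)\|Rm\|_{n/2}(0)\int_M|Rm|^{p_0/2}$.

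Integrating this from $t^{\ast}$ to any $s\in[t^{\ast},t]$ gives $\int_M|Rm|^{p_0/2}(s)\leq\exp\bigl(c(n,\gamma)\|Rm\|_{n/2}(0)(s-t^{\ast})\bigr)\int_M|Rm|^{p_0/2}(t^{\ast})$. Because $s-t^{\ast}\leq T_0=\gamma C_S^2(0)$, the product $\|Rm\|_{n/2}(0)(s-t^{\ast})$ is bounded by $\gamma\varepsilon_1(n,\gamma)$, so the exponential prefactor is controlled by $n$ and $\gamma$. Taking the $2/p_0$ root and invoking the Step 1 bound at $t^{\ast}$ then produces the claimed inequality on $[t^{\ast},t]\supset[t/2,t]$. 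The only delicate point is the choice of $\varepsilon_1(n,\gamma)$: it must be shrunk below $\varepsilon(n,\gamma)$ just enough so that the gradient coefficient generated by \eqref{3.3} is dominated by the dissipative coefficient $4(p_0/2-1)/(p_0/2)$ uniformly on $[0,T_0]$; once that absorption is secured, the Gronwall step is painless because the length of the propagation interval $t-t^{\ast}$ itself scales like $T_0$.
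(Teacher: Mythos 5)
Your proposal is correct and follows essentially the same route as the paper: apply Lemma \ref{lem_n2_control} to get the bound at some $t^{\ast}\in[t/3,t/2]$, combine \eqref{2.3} (with $p=q_0=p_0/2$) and \eqref{3.3} together with Lemma \ref{lem_3.2} and the bound on $e^{8s\delta_0/n}$ to absorb the gradient term by shrinking $\varepsilon_1(n,\gamma)$, and then integrate (Gronwall) over $[t^{\ast},t]$, using $\|Rm\|_{n/2}(0)(t-t^{\ast})\leq\gamma\|Rm\|_{n/2}(0)C_S^2(0)\leq\gamma\varepsilon_1$ to control the exponential factor. The only cosmetic difference is that the paper keeps a residual $-\tfrac{3(q_0-1)}{q_0}$ gradient term rather than making the coefficient nonpositive, which is immaterial since that term is discarded in the integration anyway.
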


\begin{proof}
We set $q_0=p_0/2$.  
By \eqref{3.3} and Lemma \ref{lem_3.2} we infer
\begin{align}
\int_M |Rm|^{q_0+1} 
&\leq c(n) e^{\frac{8t}{n}\delta_0}\|Rm\|_{n/2} (C_S^2(0) \int_M |\nabla |Rm|^{q_0/2}|^2+\int_M |Rm|^{q_0}) \notag \\
&\leq c_3(n, \gamma) \frac{q_0-1}{q_0} (\|Rm\|_{n/2}(0)C_S(0)^2\int_M |\nabla |Rm|^{q_0/2}|^2
\notag\\
&+\|Rm\|_{n/2}(0) \int_M |Rm|^{q_0})\notag
\end{align}
for a constant $c_3(n, \gamma)>0$. Set
\begin{equation}\label{3.29}
\varepsilon_1(n, \gamma)=\min\{\varepsilon(n, \gamma), c_3(n, \gamma)^{-1}\}.
\end{equation} 
Since it is assumed that $\|Rm\|_{n/2}(0)C_S(0)^2\leq \varepsilon_1(n, \gamma)$, 
We deduce
\begin{equation} \label{3.30}
\int_M |Rm|^{q_0+1} \leq \frac{q_0-1}{q_0} (\int_M |\nabla |Rm|^{q_0/2}|^2+c_3(n, \gamma)\|Rm\|_{n/2}(0) \int_M |Rm|^{q_0}).
\end{equation}
Applying \eqref{2.2} with $p=q_0$ we then deduce \vspace{2mm}
\begin{equation} \label{3.31}
\frac{\partial }{\partial t} \int_M |Rm|^{q_0} \leq  -\frac{3(q_0-1)}{q_0} \int_M |\nabla |Rm|^{q_0/2}|^2+
c_4(n, \gamma)\|Rm\|_{n/2}(0) \int_M |Rm|^{q_0}
\end{equation}
for a constant $c_4(n,\gamma)>0$. 

The desired estimate follows from an integration of \eqref{3.31} and  Lemma \ref{lem_n2_control}. 
\end{proof}

\begin{Cor}
Under the assumption of Lemma \ref{lem_3.4} there holds
\[
\|Rm\|_{p_0/2} \leq c(n, \gamma)\|Rm\|_{n/2}(0).
\]
in the interval $[T_0/2, T_0] \cap [0, T)$ for a constant $c(n, \gamma)>0$.
\end{Cor}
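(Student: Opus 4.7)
The plan is to apply Lemma \ref{lem_3.4} directly at the endpoint $t = T_0$ and exploit the fact that, with $\vol(g(0)) = 1$, the definition $T_0 = \gamma\, C_S^2(0)$ makes the factor $C_S^2(0)/t$ a $\gamma$-dependent constant rather than a possibly large quantity.

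More precisely, since the hypotheses of Lemma \ref{lem_3.4} are in force and $T_0 \in (0, T_0] \cap (0, T)$ (assuming $T_0 < T$, otherwise the statement is vacuous on the relevant interval), Lemma \ref{lem_3.4} applied at time $t = T_0$ yields
\begin{equation*}
\|Rm\|_{p_0/2}(s) \leq c(n, \gamma)\, \|Rm\|_{n/2}(0)\left(\frac{C_S^2(0)}{T_0}+1\right)^{2/n}
\end{equation*}
for every $s \in [T_0/2, T_0]$. Now using the defining identity $T_0 = \gamma\, C_S^2(0)$ (valid because we are assuming $\vol(g(0)) = 1$), we have $C_S^2(0)/T_0 = 1/\gamma$, so the bracketed factor equals $((1+\gamma)/\gamma)^{2/n}$, which depends only on $n$ and $\gamma$. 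Absorbing this factor into the constant $c(n,\gamma)$ produces the claimed bound on $[T_0/2, T_0] \cap [0, T)$.

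There is essentially no obstacle here: the corollary is a direct specialization of Lemma \ref{lem_3.4} to the natural time scale $T_0$, the entire point being that this is the largest time scale at which the singular factor $(C_S^2(0)/t)^{2/n}$ from the $L^{p_0/2}$ smoothing estimate remains under dimensional/$\gamma$ control. The only care needed is to observe that $T_0 > 0$ and that $[T_0/2, T_0] \subset (0, T_0]$, so that Lemma \ref{lem_3.4} does apply on this subinterval, and to note that if one dropped the normalization $\vol(g(0)) = 1$, then by the scaling remark following Lemma \ref{lem_n2_control}, one would simply pick up an appropriate power of $\vol(g(0))$ in the final constant.
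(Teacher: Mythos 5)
Your main idea is exactly the intended one: with $\vol(g(0))=1$ the time scale is $T_0=\gamma\,C_S^2(0)$, so on times comparable to $T_0$ the singular factor $\bigl(C_S^2(0)/t+1\bigr)^{2/n}$ from Lemma \ref{lem_3.4} is bounded by a constant depending only on $n$ and $\gamma$, and the corollary is an immediate specialization (the paper gives no separate proof, as none is needed).

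There is, however, one genuine slip: your dismissal of the case $T\leq T_0$ as ``vacuous'' is incorrect, and it matters because at this stage of the paper one does not yet know $T>T_0$ (that is only established later, in the proof of Theorem \ref{theo_B}, using Lemma \ref{lem_4.1}); this is precisely why the statement intersects with $[0,T)$. If $T_0/2<T\leq T_0$, the interval $[T_0/2,T_0]\cap[0,T)=[T_0/2,T)$ is nonempty, and your argument, which applies Lemma \ref{lem_3.4} only at $t=T_0$, does not cover it, since $T_0\notin(0,T)$. The fix is immediate and avoids the case split altogether: for each $s\in[T_0/2,T_0]\cap[0,T)$ apply Lemma \ref{lem_3.4} with $t=s$ (note $s\in(0,T_0]\cap(0,T)$ and $s\in[s/2,s]$), which gives
\[
\|Rm\|_{p_0/2}(s)\leq c(n,\gamma)\,\|Rm\|_{n/2}(0)\left(\frac{C_S^2(0)}{s}+1\right)^{2/n},
\]
and then use $s\geq T_0/2=\tfrac{\gamma}{2}C_S^2(0)$ to bound $C_S^2(0)/s\leq 2/\gamma$, so the bracket is at most $\bigl((2+\gamma)/\gamma\bigr)^{2/n}$, a constant depending only on $n$ and $\gamma$. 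With that adjustment the proof is complete and coincides with the paper's intent.
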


\section{Proofs of the Main Theorems}

\begin{Lem}\label{lem_4.1}
Under the assumption of Lemma \ref{lem_3.4} there holds for each $t\in(0, T_0] \cap (0, T)$
\begin{equation} \label{4.1}
\|Rm\|_{C^0}(t) \leq c(n, \gamma) 
 \frac{C_S(0)^2}{t} \| Rm \|_{n/2}(0)
\end{equation}
with a constant $c(n, \gamma)>0$.
\end{Lem}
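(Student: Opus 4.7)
The plan is to establish \eqref{4.1} by a parabolic Moser iteration, bootstrapping the $L^{p_0/2}$ bound of Lemma \ref{lem_3.4} on the strip $[t/2, t]$ up to an $L^\infty$ estimate at the final time. Since $p_0/2 = n^2/(2(n-2)) > n/2$, the iteration is in the subcritical regime, so the standard machinery applies; the only twist is that the Sobolev inequality used is the one along the Ricci flow supplied by Corollary \ref{cor_2.1}, which carries a weight $C_S^2(0)$ on the gradient term. This effective length scale $C_S(0)$ is what will produce the $C_S^2(0)/t$ factor on the right-hand side of \eqref{4.1}.

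For each $p \geq p_0/2$ and nested times $t/2 \leq \tau < \tau' \leq t$, I would test the evolution inequality \eqref{2.3} against a Lipschitz cutoff $\eta^2(s)$ with $\eta(\tau)=0$, $\eta \equiv 1$ on $[\tau', t]$, and $|\eta'|\leq 2/(\tau'-\tau)$, and integrate in $s$ by parts, producing
\[
\sup_{s \in [\tau', t]} \int_M |Rm|^p + \int_{\tau'}^t\int_M |\nabla |Rm|^{p/2}|^2 \leq \frac{C}{\tau'-\tau}\int_\tau^t\int_M |Rm|^p + c(n)p\int_\tau^t\int_M |Rm|^{p+1}.
\]
Abbreviating the $L^{p_0/2}$ bound of Lemma \ref{lem_3.4} as $\Lambda := c(n,\gamma)\|Rm\|_{n/2}(0)(C_S^2(0)/t + 1)^{2/n}$, the cubic term is dominated by H\"older interpolation between $L^{p_0/2}$ and $L^{pn/(n-2)}$, giving a bound of the form $\Lambda^{\alpha(p)}\bigl(\int_M |Rm|^{pn/(n-2)}\bigr)^{\beta(p)}$, to which the Sobolev inequality of Corollary \ref{cor_2.1} applied to $|Rm|^{p/2}$, followed by Young's inequality, allows the gradient term to be absorbed into the LHS. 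The key point is that interpolating against $\Lambda$ rather than $\|Rm\|_{n/2}$ avoids the $p$-dependent smallness restriction that would otherwise prevent the iteration from being pushed past a fixed $p$.

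A second application of the Sobolev inequality of Corollary \ref{cor_2.1} to $|Rm|^{p/2}$ converts the LHS of the resulting inequality into control of the $L^{pn/(n-2)}$ norm in space-time, giving a single Moser step
\[
\|Rm\|_{L^{pn/(n-2)}([\tau', t] \times M)} \leq \Big(\frac{C(n,\gamma)\,p^{a}\,(1 + \Lambda^{b}C_S^2(0))}{\tau' - \tau}\Big)^{1/p}\|Rm\|_{L^p([\tau, t] \times M)}
\]
for fixed exponents $a,b$. Iterating with the sequences $p_k = (p_0/2)(n/(n-2))^k$ and $\tau_k = t(1-2^{-k-1})$, the convergence of $\sum_k p_k^{-1}\log p_k$ makes the product of per-step constants finite; a direct accounting of how the $C_S^2(0)$ factors from Corollary \ref{cor_2.1} accumulate over $k$ yields
\[
\|Rm\|_{C^0}(t) \leq C(n,\gamma)\Big(\frac{C_S^2(0)}{t}\Big)^{(n-2)/n}\Lambda,
\]
and combining with the explicit form of $\Lambda$ gives \eqref{4.1}, because $(C_S^2(0)/t)^{(n-2)/n}(C_S^2(0)/t)^{2/n} = C_S^2(0)/t$. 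The main difficulty is the scaling bookkeeping: one must verify that the $C_S^2(0)$ factors from Corollary \ref{cor_2.1} accumulate to exactly the power $(n-2)/n$ needed to complement the $2/n$ already present in $\Lambda$, and that the $L^{p_0/2}$ bound is strong enough to absorb the cubic term uniformly in $p$ so that the iteration reaches $p=\infty$.
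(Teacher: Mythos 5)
Your outline follows essentially the same route as the paper's appendix: a parabolic Moser iteration on the strip $[t/2,t]$, in which the supercritical $L^{p_0/2}$ bound of Lemma \ref{lem_3.4} is used (via H\"older interpolation and Young's inequality) to absorb the cubic term $\int_M |Rm|^{p+1}$ into the gradient term with only polynomially-in-$p$ growing constants, the Sobolev inequality along the flow (Corollary \ref{cor_2.1}, with its $C_S^2(0)$ weight) supplies the integrability gain, and the accumulated $C_S^2(0)$ and $t^{-1}$ factors are tracked through the iteration; your final accounting, $\|Rm\|_{C^0}(t)\lesssim (C_S^2(0)/t)^{(n-2)/n}\,\Lambda$ with $\Lambda$ the $L^{p_0/2}$ bound, matches what the paper's \eqref{5.9}--\eqref{5.10} give (the replacement of $(C_S^2(0)/t+1)^{2/n}$ by $(C_S^2(0)/t)^{2/n}$ is harmless since $t\le T_0=\gamma C_S^2(0)$ under $vol(0)=1$).

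There is, however, one step that does not go through as you state it: the single Moser step
\[
\|Rm\|_{L^{pn/(n-2)}([\tau',t]\times M)} \leq \Bigl(\tfrac{C(n,\gamma)p^a(1+\Lambda^b C_S^2(0))}{\tau'-\tau}\Bigr)^{1/p}\|Rm\|_{L^p([\tau,t]\times M)}
\]
claims the \emph{elliptic} Sobolev gain $p\mapsto p\cdot\frac{n}{n-2}$ for the pure space--time norm. From the energy inequality you only control $\sup_{s\in[\tau',t]}\int_M|Rm|^p$ and $\int_{\tau'}^t\int_M|\nabla|Rm|^{p/2}|^2$; integrating the spatial Sobolev inequality in time then bounds the mixed norm $\int_{\tau'}^t\bigl(\int_M|Rm|^{p\frac{n}{n-2}}\bigr)^{\frac{n-2}{n}}ds$, not $\iint|Rm|^{p\frac{n}{n-2}}$, since the latter would require the time integral of the Sobolev bound raised to the power $\frac{n}{n-2}$, which is not available. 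The standard (and the paper's) fix is to interpolate the spatial estimate against the sup-in-time $L^p$ control that your energy inequality already provides, via $\int_M|Rm|^{p(1+\frac{2}{n})}\le\bigl(\int_M|Rm|^p\bigr)^{2/n}\bigl(\int_M|Rm|^{p\frac{n}{n-2}}\bigr)^{\frac{n-2}{n}}$, which yields the parabolic gain $\mu=1+\frac{2}{n}$ per step (the paper's \eqref{5.8}). With $q_k=(p_0/2)\mu^k$ the sums $\sum_k q_k^{-1}$ and $\sum_k q_k^{-1}\log q_k$ still converge and the exponent bookkeeping lands on the same $(C_S^2(0)/t)^{(n-2)/n}$ prefactor, so your conclusion is unaffected; but as written the per-step inequality, and hence the choice $p_k=(p_0/2)\bigl(\tfrac{n}{n-2}\bigr)^k$, needs to be corrected.
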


The proof of this lemma is based on the technique of the well-known Moser iteration and presented in the appendix.

\begin{proof}[Proof of Theorem \ref{theo_B}]
The smooth solution $g=g(t)$ of the Ricci flow on $M$ with $g(0)= g_0$ exists on 
a maximal time interval $[0, T)$ for some $T>0$. ($T=\infty$ is allowed.) 
 We claim that $T > T_0$. Assume $T \leq T_0$.
Then we have 
\[
\limsup_{t \rightarrow T} \|Rm\|_{C^0}(t)=\infty,
\]
 contradicting Lemma \ref{lem_4.1}. The estimate \eqref{1.4} in Theorem \ref{theo_B} 
follows from Lemma \ref{lem_4.1}. The estimate \eqref{1.5} is a special case of \eqref{1.4}. The estimate \eqref{1.3} follows from Lemma \ref{lem_3.2}.  

Finally we observe that the condition $\|Rm\|_{n/2} C_S^2 \leq \varepsilon(n,\gamma)$ for $g_0$ implies the condition \eqref{2.12}, and the proof of Lemma \ref{lem_3.2} implies the condition \eqref{2.13} for all $t\in (0, T_0]$.
Hence we can apply Corollary \ref{cor_2.1} to deduce the Sobolev inequality \eqref{2.11} for all $t\in (0, T_0]$.
\end{proof}

To prove Theorem \ref{theo_A}, we will apply the following diameter estimate theorem from \cite{Ye21}.
\vspace{1ex}

\begin{Thm}[\cite{Ye21}]\label{thm_4.1}~
\begin{enumerate}[1)]
\item Consider a compact Riemannian manifold $(M, g)$ of dimension $n \geq 3$.  Assume \begin{equation} \label{4.2}
\|u\|^2_{\frac{2n}{n-2}} \leq A\int_M |\nabla u|^2+ \frac{B}{vol(M, g)^{2/n}}\int_M u^2
\end{equation}
for all u $\in W^{1,2}(M)$, with positive constants $A$ and $B$.  Then there holds 
\begin{equation} \label{4.3}
\frac{diam(M, g)}{vol(M, g)^{1/n}}\leq   2^{\frac{n}{2}+1}(2^{\frac{n}{2}}B^{\frac{n}{2}}+1)\sqrt{\frac{A}{B}}.
\end{equation}
\item Let $g=g(t), t\in [0, T)$ be a smooth solution of the Ricci flow on a compact manifold 
$M$ of dimension $n \geq 3$ for a $T>0$, such that $vol(g(0))=1$ and \eqref{2.8} holds true for some constant $\alpha$ (or \eqref{2.12} holds true in the case 
$\alpha=0$). Then there exists a positive constant $c(n)$ such that 
\begin{equation} \label{4.4}
diam(M, g(t))\leq c(n) \left(e^{\frac{t}{n}(4\delta_0-\alpha)}vol(M, g(t))+1 \right)C_S(g(0))
\end{equation}
for $t\in (0, T)$ whenever the condition \eqref{2.10} holds true (or the condition \eqref{2.13} holds true in the case $\alpha=0$).
\end{enumerate}
\end{Thm}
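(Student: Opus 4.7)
The plan is to reduce Part 2 to Part 1 by invoking the Sobolev inequality along the Ricci flow (Theorem~\ref{theo_2.2}), so the bulk of the work lies in Part 1: extracting a diameter estimate from a purely Sobolev-type inequality of the form \eqref{4.2}.

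For Part 1, the natural critical length scale implicit in \eqref{4.2} is $\ell := vol(M,g)^{1/n}\sqrt{A/B}$, obtained by balancing the two terms on the right side. Fix $x \in M$ and, for $r \in (0,\ell]$, substitute the distance-based cutoff
\[
\varphi_r(z) = \max\{0, \min\{1, 2(r - d(x,z))/r\}\}
\]
into \eqref{4.2}; this $\varphi_r$ satisfies $\varphi_r \equiv 1$ on $B(x, r/2)$, is supported in $B(x, r)$, and has $|\nabla \varphi_r| \leq 2/r$. Bounding $\|\varphi_r\|_{2n/(n-2)}^2 \geq vol(B(x, r/2))^{(n-2)/n}$ and noting that the gradient term dominates the $L^2$ term on the right for $r \leq \ell$, one obtains
\[
vol(B(x, r/2))^{(n-2)/n} \leq \frac{c(n) A}{r^{2}} \, vol(B(x, r)).
\]
Iterating this relation across the dyadic scales $r/2^k$ in a Moser-type fashion then upgrades it to a Euclidean-type lower bound $vol(B(x, r)) \geq c(n) A^{-n/2} r^n$ valid in a suitable range below $\ell$. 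Finally, given $x, y \in M$ realizing the diameter $D := diam(M,g)$, place disjoint balls of radius comparable to $\ell$ along a minimizing geodesic from $x$ to $y$; there are of order $D/\ell$ of them. The total-volume constraint $\sum_i vol(B_i) \leq vol(M)$, combined with the Euclidean ball lower bound at scale $\ell$, converts into an upper bound for $D$ which, after tracking constants, should produce exactly \eqref{4.3}, with the explicit $B^{n/2}$ factor emerging from the ratio $vol(M)/\ell^n$.

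Part 2 is then a direct application. At a time $t$ satisfying \eqref{2.10} (or \eqref{2.13} when $\alpha=0$), Theorem~\ref{theo_2.2} yields \eqref{2.11}, which, after rearranging into the form \eqref{4.2}, gives
\[
A = c(n) e^{\frac{2t}{n}(4\delta_0-\alpha)} C_S(g(0))^2, \qquad B = c(n) e^{\frac{2t}{n}(4\delta_0-\alpha)} vol(g(t))^{2/n}.
\]
Substituting into \eqref{4.3}, using $\sqrt{A/B} = C_S(g(0))/vol(g(t))^{1/n}$ and computing $B^{n/2}$ explicitly, yields \eqref{4.4} after absorbing all dimensional constants into a single $c(n)$; for the general case in which $vol(g(0)) \neq 1$, one first applies the $vol(g(0))$-rescaled version of \eqref{2.11}.

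The main obstacle is executing Part 1 with constants sharp enough to match the explicit form \eqref{4.3}. The Moser-style iteration converting \eqref{4.2} into a Euclidean ball volume lower bound requires careful bookkeeping of the scale-dependent terms as the balls shrink, and the subsequent chain/covering argument, though robust in spirit, must likewise be carried out with care so that the contribution of $vol(M)$ appears in the clean multiplicative form $B^{n/2}$ rather than in a more complicated manner.
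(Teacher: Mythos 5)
The paper itself contains no proof of Theorem \ref{thm_4.1}: both parts are quoted from \cite{Ye21} and used as a black box (in the proof of Theorem \ref{theo_A} only part 1, fed with the Sobolev inequality of Theorem \ref{theo_2.2} at $t=T_0$, is actually invoked). So there is no internal argument to compare yours against; I can only judge your proposal on its own terms. For part 1 your route --- distance cutoffs in \eqref{4.2}, the dyadic iteration giving $vol(B(x,r))\geq c(n)A^{-n/2}r^n$ for $r\leq \ell=vol(M,g)^{1/n}\sqrt{A/B}$, then packing disjoint balls of radius comparable to $\ell$ along a minimizing geodesic --- is the standard way to extract a diameter bound from a Sobolev inequality, and it is sound in outline: it delivers $diam/vol^{1/n}\leq c(n)\bigl(B^{n/2}+1\bigr)\sqrt{A/B}$. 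Be aware, though, that \eqref{4.3} asserts the explicit constant $2^{\frac n2+1}(2^{\frac n2}B^{\frac n2}+1)$, and a generic Moser-type iteration plus covering argument will produce some unspecified $c(n)$; recovering the stated constants requires a more careful (possibly structurally different) argument, as you yourself acknowledge.

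In part 2 there is a concrete mismatch you should not pass over. Reading \eqref{2.11} in the form \eqref{4.2} gives $A=c(n)e^{\frac{2t}{n}(4\delta_0-\alpha)}C_S(g(0))^2$ and $B=c(n)e^{\frac{2t}{n}(4\delta_0-\alpha)}vol(g(t))^{2/n}$, so indeed $\sqrt{A/B}=C_S(g(0))/vol(g(t))^{1/n}$, but $B^{n/2}=c(n)^{n/2}e^{t(4\delta_0-\alpha)}vol(g(t))$: the exponential gets raised to the power $n/2$. Your substitution therefore yields $diam(g(t))\leq c(n)\bigl(e^{t(4\delta_0-\alpha)}vol(g(t))+1\bigr)C_S(g(0))$, which is strictly weaker than \eqref{4.4} (whose factor is $e^{\frac tn(4\delta_0-\alpha)}$) whenever $4\delta_0>\alpha$, in particular in the case $\alpha=0$ used throughout the paper. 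Either \cite{Ye21} proves part 2 by a sharper argument than the naive reduction to part 1, or the exponent in \eqref{4.4} carries a typo; in either case your assertion that the computation ``yields \eqref{4.4}'' is not literally correct as written. For the purposes of this paper the discrepancy is harmless --- the theorem is applied at $t=T_0$ with $T_0\delta_0\leq 1+\varepsilon_n$, so both exponentials are bounded by dimensional constants and \eqref{4.9} follows either way --- but as a proof of the statement as quoted, part 2 has this gap and part 1 does not reach the stated explicit constants.
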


\begin{proof}[Proof of Theorem \ref{theo_A}] 1) We assume $\varepsilon_n \leq \varepsilon(n, 1)$ and will determine its value below, where $\varepsilon(n, \gamma)$ with $\gamma=1$ is from Theorem \ref{theo_B}.  Let $(M, g_0)$ be a compact Riemannian manifold satisfying $\|Rm\|_{n/2} C_S^2 \leq \varepsilon_n$. By a rescaling we can assume that $vol(g_0)=1$. 
Consider the smooth solution $g=g(t)$ of the Ricci flow on $M$ with $g(0)=g_0$. By Theorem \ref{theo_B}, $g(t)$ exists on $[0, T_0]$ with $T_0=C_S(g_0)^2$ and satisfies
\begin{equation} \label{4.5}
\|Rm\|_{C^0}(g(T_0))\leq c(n, 1) \|Rm\|_{n/2}(g_0).
\end{equation}

There holds 
\begin{equation} \label{4.6}
\frac{dvol}{d t}= -\int_M R.
\end{equation}
Hence
\begin{equation} \label{4.7}
|\frac{dvol}{dt}| \leq  \|Rm\|_{n/2} vol^{\frac{n-2}{n}}
\end{equation}
and then $|\frac{dvol^{2/n}}{dt}| \leq \frac{2}{n} \|Rm\|_{n/2}$. Applying the estimate 
\eqref{1.3} we then deduce 
\[
vol(g(T_0))^{2/n}\leq 1+\frac{4}{n}\|Rm\|_{n/2} (g_0) C_S(g_0)^2\leq 1+\frac{4}{n}\varepsilon_n.
\]
By \eqref{3.18} there holds $\varepsilon_n \leq \frac{1}{n(n-1)}$, whence
\begin{equation} \label{4.8}
vol(g(T_0))\leq \left(1+\frac{4}{n^2(n-1)} \right)^{\frac{n}{2}}.
\end{equation}
Applying this estimate, Theorem \ref{thm_4.1}, 1) and the argument at the end of the proof of Theorem \ref{theo_B}
(alternatively, applying Theorem \ref{theo_B} and Theorem \ref{thm_4.1}, 2)), as well as the condition $\|Rm\|_{n/2}(g_0) C_S(g_0)^2 \leq \varepsilon_n$, we then deduce 
\begin{equation} \label{4.9}
diam(g(T_0)) \leq c(n) C_S(g_0) 
\end{equation}
for a positive constant $c(n)$. Combining this estimate with \eqref{4.5} we infer\vspace{1mm}
\begin{align} 
\|Rm\|_{C^0}(g(T_0))diam(g(T_0))^2 &\leq c(n, 1)c(n) \|Rm\|_{n/2}(g_0)C_S(g_0)^2 \label{4.10}
\\
&\leq c(n, 1) c(n) \varepsilon_n.\notag
\end{align}
Set 
\begin{equation} \label{4.11}
\varepsilon_n=\min\{\varepsilon(n, 1), \frac{\epsilon_n}{c(n, 1) c(n)}\},
\end{equation}
where $\epsilon_n$ is from the Gromov--Ruh Theorem. Then we can apply the Gromov--Ruh Theorem to conclude that 
$M$ is diffeomorphic to an infranil manifold. (Note that $|K| \leq |Rm|$.) 
 
Conversely, let $M$ be diffeomorphic to an infranil manifold of dimension $n\geq 3$. By \cite{Gromov1978a}, $M$ is almost flat. Hence $M$ is $L^{n/2}$-almost flat, as shown by the argument around \eqref{1.6} in the Introduction. We infer that $M$ admits metrics satisfying the condition $\|Rm\|_{n/2}C_S^2 \leq \varepsilon_n$.

2) Manifolds which are diffeomorphic to infranil manifolds are almost flat by the Gromov--Ruh Theorem. As mentioned above, almost flat manifolds are 
$L^{n/2}$-almost flat. Hence manifolds which are diffeomorphic to infranil manifolds are $L^{n/2}$-almost flat.  On the other hand, it follows from part 1) of Theorem \ref{theo_A} that $L^{n/2}$-almost flat manifolds are diffeomorphic to infranil mnaifolds.   
\end{proof}

\begin{proof}[Proof of Theorem \ref{theo_C}]We set $\epsilon(n, \kappa)=c(n, \kappa)^{-2}\varepsilon_n$ for 
$n \geq 3$ and $\kappa \geq 0$, where $c(n, \kappa)$ is from Theorem \ref{gallot}.  Let $(M, g)$ be a compact Riemannian manifold of dimension $n \geq 3$ satisfying $diam^2 Ric \geq -\kappa$ and $\|Rm\|_{n/2} (\frac{diam}{vol^{1/n}})^2\leq  \epsilon(n, \kappa)$ for some $\kappa \geq 0$.
 By Theorem \ref{gallot}, $(M, g)$ then satisfies 
 $C_S \leq c(n, \kappa) \frac{diam}{vol^{1/n}}$. Hence we deduce 
\[
\|Rm\|_{n/2} C_S^2 \leq c(n, \kappa)^2 \|Rm\|_{n/2} (\frac{diam}{vol^{1/n}})^2\leq c(n, \kappa)^2 \epsilon(n, \kappa)=\varepsilon_n.
\] 
Thus we can apply Theorem \ref{theo_A} to conclude that $M$ is diffeomorphic to an infranil manifold.
\end{proof}

\appendix
\section{ $C^0$ Estimates for $Rm$ } 

The purpose of this section is to present the proof of Lemma \ref{lem_4.1}, which is based on Moser iteration \cite{Moser}. For the purpose of carefully verifying all the details and working out the explicit constants in the estimates, and for the convenience  of the reader, we present a  detailed and self-contained proof, using the same notations we used previously, and in particular the definitions in \eqref{3.6}. In principle, one can also apply for example the Moser type estimates in \cite[Theorem 4]{Yang}.

Let $\epsilon>0$.  We have for a nonnegative measurable function $f$ on a Riemannian manifold $(M, g)$ of dimension $n\geq 3$
\begin{align} \label{5.1} 
\left(\int_M f^{\frac{p_0}{p_0-2}}\right)^{\frac{p_0-2}{p_0}} &\leq \left(\int_M f^{\frac{n}{n-2}}\right)^{\frac{n-2}{p_0}} 
\left(\int_M f\right)^{\frac{p_0-2}{p_0} \cdot\frac{p_0-n}{p_0-2}} 
\\
&=\left(\varepsilon^{\frac{p_0-n}{p_0}}\int_M f^{\frac{n}{n-2}}\right)^{\frac{n-2}{p_0}} 
\left(\varepsilon^{-\frac{n-2}{p_0}}\int_M f\right)^{\frac{p_0-n}{p_0}} \notag
\\
&\leq \frac{p_0-n}{p_0} \varepsilon^{-\frac{n-2}{p_0}} \int_M f+\frac{n}{p_0} \varepsilon^{\frac{p_0-n}{p_0}\cdot \frac{n-2}{p_0} \cdot \frac{p_0}{n}}
 \left(\int_M f^{\frac{n}{n-2}}\right)^{\frac{n-2}{p_0}\cdot \frac{p_0}{n}} \notag
\\ 
&=\frac{2}{n} \varepsilon^{-(\frac{n-2}{n})^2} \int_M f+\frac{n-2}{n} \varepsilon^{\frac{2(n-2)}{n^2}} 
 \left(\int_M f^{\frac{n}{n-2}}\right)^{\frac{n-2}{n}}.  \notag
\end{align}
We also have 
\begin{equation} \label {5.2}
\int_M u^{p+1} \leq \left(\int_M u^{p_0/2}\right)^{2/p_0}
 \left(\int_M u^{p \cdot \frac{p_0}{p_0-2}}\right)^{\frac{p_0-2}{p_0}}
\end{equation}
for a nonnegative measurable function $u$ on $M$.

Now we consider a smooth solution of the Ricci flow satisfying the conditions of Lemma \ref{lem_4.1} (or Lemma \ref{lem_3.4}). Employing \eqref{2.3} and the above two inequalities with $u=|Rm|$ and $f=|Rm|^p$ we 
deduce for $p \geq 1$ and $t\in [0, T)$
\begin{align} \label{5.3}
\frac{\partial}{\partial t} \int_M |Rm|_{}^p &\leq -\frac{4(p-1)}{p} \int_M |\nabla |Rm|_{}^{p/2}|^2+c(n)p
\|Rm\|_{p_0/2}  \\
& \cdot \left(\varepsilon^{-(\frac{n-2}{n})^2} \int_M |Rm|_{}^p+\varepsilon^{\frac{2(n-2)}{n^2}} 
 \left(\int_M |Rm|_{}^{p \cdot\frac{n}{n-2}}\right)^{\frac{n-2}{n}}\right).\notag
\end{align}

Combining \eqref{5.3} with \eqref{3.1}, \eqref{3.19} and Lemma \ref{lem_3.4} we infer for $t \in [0, T_0] \cap [0, T)$, using the notations of \eqref{3.5}, that
\begin{align} \label{5.4}
&\frac{\partial}{\partial t} \int_M |Rm|_{}^p
\\
&\leq \left(-\frac{4(p-1)}{p}+ c(n,\gamma)p\varepsilon^{\frac{2(n-2)}{n^2}} \|Rm\|_{p_0/2} C_S^2(0)\right)\int_M |\nabla |Rm|_{}^{p/2}|^2\notag
\\
&\hspace{3mm}+c(n,\gamma)p\| Rm\|_{p_0/2} (\varepsilon^{-(\frac{n-2}{n})^2}+\varepsilon^{\frac{2(n-2)}{n^2}} ) \int_M |Rm|_{}^p  \notag \\
&\leq \left(-\frac{4(p-1)}{p}+ c(n,\gamma)p\varepsilon^{\frac{2(n-2)}{n^2}} \theta_0
(\frac{C_S^2(0)}{t}+1)^{2/n} \right)\int_M |\nabla |Rm|_{}^{p/2}|^2 \notag \\
& \hspace{3mm} +c(n,\gamma)p\theta_0 C_S^{-2}(0) (\frac{C_S^2(0)}{t}+1)^{2/n}(\varepsilon^{-(\frac{n-2}{n})^2}+\varepsilon^{\frac{2(n-2)}{n^2}} ) \int_M |Rm|_{}^p. \notag
\end{align}
We choose $\varepsilon$ to solve the equation
\[
c(n,\gamma)p\theta_0 (\frac{C_S^2(0)}{t}+1)^{2/n}\varepsilon^{\frac{2(n-2)}{n^2}}=\frac{p-1}{p}.
\]
Then we deduce
\begin{align} \label{5.5}
\frac{\partial}{\partial t} \int_M |Rm|_{}^p &\leq -\frac{3(p-1)}{p}\int_M |\nabla |Rm|_{}^{p/2}|^2 \\
&\hspace{3mm}+C_S^{-2}(0)c(n,\gamma)\left( p^{\frac{n}{2}} \theta_0^{\frac{n}{2}}(\frac{C_S^2(0)}{t}+1)+1 \right) \int_M |Rm|_{}^p\notag
\end{align}
with a new constant $c(n, \gamma)>0$.
For convenience, we abbreviate $C_S(0)$ to $C_S$ and define
\[
\Gamma_p(t)=c(n,\gamma)\left(p^{\frac{n}{2}} \theta_0^{\frac{n}{2}}(\frac{C_S^2}{t}+1)+1\right),
\]
where  $c(n, \gamma)$ is from \eqref{5.5}.

Next we consider $T' \in [0, T_0] \cap [0, T)$. Set for $0<\tau<\tau'\leq T'$  
\begin{equation*}
\psi(t)=\begin{cases}
0, & 0\leq t \leq \tau \\
\frac{t-\tau}{\tau'-\tau}, & \tau\leq t\leq \tau'\\
1, & \tau' \leq t \leq T'
\end{cases}
\end{equation*}

Then we have 
\begin{align} \label{5.6}
\frac{\partial}{\partial t} \left(\psi \int_M |Rm|_{}^p\right)+\psi \int_M |\nabla |Rm|_{}^{p/2}|^2 
\leq (\psi'+\psi C_S^{-2} \Gamma_p ) \int_M |Rm|_{}^p,
\end{align}
provided that $p \geq \frac{3}{2}$. Integration then yields for $\tau'< t \leq T'$
\begin{align} \label{5.7}
\int_{M} |Rm|_{}^p|_t + \int_{\tau'}^t \int_M |\nabla |Rm|_{}^{p/2}|^2 
&\leq \int_{\tau}^{T'} (\frac{1}{\tau'-\tau}+ C_S^{-2}\Gamma_p) \int_M |Rm|_{}^p.
\end{align}
Next we derive from \eqref{5.7}
\begin{align}
&\int_{\tau'}^{T'} \int_{M} |Rm|_{}^{p(1+\frac{2}{n})} 
\\
&\leq \int_{\tau'}^{T'} \left(\int_{M} |Rm|_{}^p \right)^{2/n}
\left( \int_{M} |Rm|_{}^{p \cdot \frac{n}{n-2}} \right)^{\frac{n-2}{n}} \notag
\\
&\leq c(n, \gamma) \sup_{\tau' \leq t\leq T'} \left(\int_{M} |Rm|_{}^p \right)^{2/n} \int_{\tau'}^{T'} (C_S^2 \int_M |\nabla |Rm|_{}^{p/2}|^2 
+\int_M |Rm|_{}^p) \notag \\
&\leq c(n,\gamma)(C_S^2+1) \left( \int_{\tau}^{T'} (\frac{1}{\tau'-\tau}+C_S^{-2}\Gamma_p)
\int_M |Rm|_{}^p \right)^{1+\frac{2}{n}}. \notag
\end{align}

Now we define
\[
H(p, \tau)=\int_{\tau}^{T'} \int_M |Rm|_{}^p
\]
and deduce 
\begin{equation} \label{5.8}
H(p(1+\frac{2}{n}), \tau') \leq  c(n, \gamma) C_S^{-4/n}  \left(\frac{C_S^2}{\tau'-\tau}+\Gamma_p(\tau)\right)^{1+\frac{2}{n}}H(p, \tau)^{1+\frac{2}{n}}.
\end{equation}
Set $\mu=1+\frac{2}{n}, q_k=q_0 \mu^k$ (recall $q_0=p_0/2$) and $\tau_k=(1-\frac{1}{\mu^{k+1}}) T'.$ Then we have
\[
\frac{1}{\tau_{k+1}-\tau_k}=\frac{\mu^{k+2}}{\mu-1} \cdot \frac{1}{T'}
\]
and 
\begin{align} 
\frac{C_S^2}{\tau_{k+1}-\tau_k}+\Gamma_{q_k}(\tau_k) &\leq c(n, \gamma) \mu^k q_k^{n/2} (\frac{1}{2}+\theta_0^{n/2})\frac{C_S^2}{T'} \notag \\
&\leq c(n, \gamma) \mu^k q_k^{n/2} \frac{C_S^2}{T'} \notag
\end{align}
with a new $c(n, \gamma)>0$. 
It follows that
\[
H(q_k(1+\frac{2}{n}), \tau_{k+1}) \leq  c(n, \gamma) C_S^{-4/n} \mu^{k(1+\frac{n}{2})}q_k^{\frac{n}{2}(1+\frac{n}{2})} 
\left(\frac{C_S^2}{T'}\right)^{1+\frac{2}{n}} H(q_k, \tau_k)^{1+\frac{2}{n}}.
\]
and hence
\[
H(q_{k+1}, \tau_{k+1})^{1/q_{k+1}} \leq c(n, \gamma)^{1/q_{k+1}} C_S^{-\frac{4}{n q_{k+1}} }\mu^{\frac{k}{q_{k}}} 
q_k^{\frac{n}{2q_k}}\left(\frac{C_S^2}{T'}\right)^{1/q_k} H(q_k, \tau_k)^{1/q_k}.
\]
There holds
\[
\sum_{k\geq 0} \frac{1}{q_{k+1}}=\frac{n-2}{n}, \, \,  \sum_{k\geq 0} \frac{1}{q_{k}}=1-\frac{4}{n^2}.
\]
Replacing $T'$ by $t>0$, iterating the above estimate and taking the  limit, we then arrive at 
\begin{equation} \label{5.9}
\sup_{M \times [(1-\frac{1}{\mu})t, t]}|Rm| \leq c(n, \gamma) C_S^{-\frac{4(n-2)}{n^2}}
 \left(\frac{C_S^2}{t}\right)^{1-\frac{4}{n^2}} \left( \int_{(1-\frac{1}{\mu})t}^t \int_M |Rm|_{}^{p_0/2}\right)^{2/p_0}
\end{equation}
By Lemma \ref{lem_3.4} we then infer
\begin{equation} \label{5.10}
\sup_{M \times [\frac{2}{n+2}t, t])}|Rm| \leq c(n, \gamma) 
 \frac{C_S^2}{t} \| Rm \|_{n/2}(0).
\end{equation}

Hence we have proved Lemma \ref{lem_4.1}.

\bibliographystyle{alpha}
\bibliography{references}

\end{document}